\newcommand{\items}{\begin{itemize}[leftmargin=25pt,rightmargin=15pt]
  \setlength\itemsep{2pt}}
\newcommand{\stopitems}{\end{itemize}}
\newtheorem{theorem}{Theorem}[section] 
\newtheorem*{theorem*}{Theorem}
\newtheorem{lemma}[theorem]{Lemma}
\newtheorem{question}[theorem]{Question}
\newtheorem*{conjecture*}{Conjecture}
\newtheorem*{question*}{Question}
\newtheorem*{lemma*}{Lemma}
\newtheorem{proposition}[theorem]{Proposition}
\newtheorem{corollary}[theorem]{Corollary}
\newtheorem*{corollary*}{Corollary}
\theoremstyle{definition}
\newtheorem{definition}[theorem]{Definition}
\newtheorem{remark}[theorem]{Remark}
\newtheorem*{example*}{Example}
\newtheorem*{remark*}{Remark}
\newtheorem*{remarks*}{Remarks}
\newtheorem*{addenda*}{Addenda}
\newtheorem*{construction*}{Construction}
\newcommand{\RP}{\mathbb{RP}}
\newcommand{\CP}{\mathbb{CP}}
\newcommand{\sss}{S^2 \times S^2} 
\newcommand{\R}{\mathbb R}
\newcommand{\Z}{\mathbb Z}
\newcommand{\bc}{\mathbb C}
\renewcommand{\phi}{\varphi}
\newcommand{\cs}{\mathbin{\#}}
\newcommand{\cpone}{\bc P^1}
\newcommand{\cptwo}{\bc P^2}
\newcommand{\cptwobar}{\smash{\overline{\bc P}^2}}
\newcommand{\unred}[1]{ \ignorespaces}  
\title{Smoothly knotted surfaces that remain distinct after many internal stabilizations\\ \today}
\author[Dave Auckly]{Dave Auckly}
\address{Department of Mathematics\newline\indent Kansas State University\newline\indent  Manhattan,
Kansas 66506}
\email{dav@math.ksu.edu}
\thanks{The author was partially supported by Simons Foundation grant 585139, and NSF grant DMS-1952755. The result was obtained while the author was  at the Simons
Laufer Mathematical Science Institute partially funded by NSF grant DMS-1928930, during the Fall 2022 semester.  
Math.~Subj.~Class.~2010: 57M25 (primary), 57Q60 (secondary).}
\begin{document}
\setlength{\headheight}{12.0pt}.
\begin{abstract}Internal stabilization adds a trivial handle to an  embedded surface in a coordinate chart. It is known that any pair of smoothly knotted surfaces in a simply-connected $4$-manifold become smoothly isotopic after sufficiently many internal stabilizations. In this paper, we show that there is no upper bound on the number of internal stabilizations required. In fact, this behavior is fairly generic. The definition of a subtly smoothly knotted pair of surfaces is given and it is shown that many surfaces may be modified to obtain subtly knotted surfaces with large internal stabilization distance. Furthermore, it is shown that after stabilizing any $4$-manifold with contain topologicaly isotopic, smoothly related, non-isotopic copies of any $3$-manifold having positive first betti number. 
\end{abstract}
\maketitle
\section{Introduction}
We first describe the results presented in this paper and then discuss prior work in order to place the results in context. Our main result depends on the following standard relations between diffeomorphisms. 
\begin{definition}\label{d1}
A diffeomorphism $\alpha$ is said to be \emph{topologically pseudoisotopic to the identity} if there is  
a homeomorphism $G:I\times Z \to I\times Z$ so that $G(0,z) = (0,z)$, and $G(1,z) = (1,\alpha(z))$. It is said to be 
\emph{pseudoisotopic to the identity} if $G$ may be taken to be a diffeomorphism. It is said to be \emph{topologically isotopic to the identity} if the homeomorphism $G$ may taken to be level-preserving, i.e., 
$G(t,z) = (t,G_t(z))$. It is smoothly isotopic to the identity if, in addition, it may be taken to be a diffeomorphism. If $\alpha$ restricts to the identity on a distinguished open ball, the connected sum $\alpha\cs 1_{\sss}$ will be defined. In this case we say $\alpha$ is \emph{$1$-stably isotopic to the identity} if $\alpha\cs 1_{\sss}$ is isotopic to the identity. 
\end{definition}
It also depends on the following relation between surfaces, which defines a new notion of knotted surfaces.
\begin{definition}\label{subtle}
We call a pair of embeddings $j, j':\Sigma\hookrightarrow Z$ \emph{isotopic} if there is a diffeomorphism $\beta$ that is isotopic to the identity and satisfies $j' = \beta\circ j$. The pair is \emph{subtly knotted} if there is a diffeomorphism $\alpha$ that is pseudoisotopic, topologically isotopic, and $1$-stably isotopic to the identity so that $j'=\alpha\circ j$ and yet $j$ and $j'$ are not smoothly isotopic.
\end{definition}

\begin{remark}
Notice that subtle knotting is strictly a $4$-dimensional phenomena. If two  knots in $S^3$ are related by an orientation preserving homeomorphism, they are isotopic \cite{cerf}. Similarly two such knots with homeomorphic complements are isotopic \cite{GL}. In general $3$-manifolds there are many isotopically distinct knots related by orientation-preserving homeomorphisms, \cite{abdpr}.  
\end{remark}

By the tubular neighborhood theorem for any smooth embedding $j:\Sigma\to Z$ there is a real vector bundle (of rank two in our case) $\nu(\Sigma)$ and an embedding $\nu_j:\nu(\Sigma)\hookrightarrow Z$
so that $j = \nu_j\circ\sigma_0$ where $\sigma_0$ is the zero section. 
We define the  \emph{internal stabilization} of $j$ to be the restriction of $\nu_j$ to $\Sigma\cs T^2$ when we make the sum of pairs $(\nu(j),\Sigma)\cs(S^4,T^2)$ and identify $\nu(j)\cs S^4$ with $\nu(j)$. Here $T^2$ is an unknotted torus in $S^4$. This is denoted $j^T:\Sigma\cs T^2\hookrightarrow Z$. It is well defined up to smooth isotopy. The $g$-fold internal stablilization  will be denoted $j^{gT}:\Sigma\cs F_g\hookrightarrow Z$. Just as there are many different notions of stabilization, there are many different notions of distances between embedded surfaces, \cite{singh}. For this paper we use the following.
\begin{definition}
The \emph{internal stabilization distance} $\delta(j_0,j_1)$ between $j_0:\Sigma\to Z$ and $j_0:\Sigma\to Z$ is the minimal $g$ so that $j^{gT}_0$ and $j^{gT}_1$ are isotopic.
\end{definition}
\noindent 
We can now state our main theorem and corollaries.

\begin{theorem}\label{TA}
Given any simply-connected, stably smoothable, closed $4$-manifold $X$, there is a  number $N$ and a smooth manifold $Z$ 
homeomorphic to 
 $X\cs N(\sss)$ with the following property.  For any smooth, homologically essential embedding $j_0:\Sigma\hookrightarrow Z$ of a closed, oriented surface $\Sigma$, there is a collection of smooth embeddings $\{j_p:\Sigma\hookrightarrow Z\}$ so that $j_p$ and $j_q$ are subtly smoothly knotted. Furthermore, for any $g$ there is a $p$ depending on $g$, so that $j_0^{gT}$ and $j_p^{gT}$ are subtly smoothly knotted. In particular the internal stabilization diameter of any topological isotopy class of homologically essential surfaces in $Z$ is infinite. 
\end{theorem}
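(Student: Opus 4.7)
The plan is to build $Z$ as a suitable multi-fold $\sss$-stabilization of $X$, find an infinite family of diffeomorphisms $\alpha_p$ of $Z$ that are mild in the three senses of Definition \ref{subtle} yet survive many external stabilizations when detected by a family gauge-theoretic invariant, set $j_p := \alpha_p \circ j_0$, and trade internal stabilizations of $\Sigma$ for external stabilizations of $Z$ to apply the survival property.

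First I would fix $N$ large enough, depending only on $X$, so that a $Z$ homeomorphic to $X \cs N(\sss)$ admits an infinite family $\{\alpha_p\}_{p \geq 1}$ of orientation-preserving diffeomorphisms with each $\alpha_p$ topologically isotopic to the identity, pseudoisotopic to the identity, and $1$-stably isotopic to the identity, yet with the property that for every $g$ there exists $p = p(g)$ such that $\alpha_p$ is not smoothly isotopic to the identity as a diffeomorphism of $Z \cs g(\sss)$. Such families can be assembled from iterated Dehn-twist style moves on essential spheres in the $\sss$ summands, and detected by the family $\baf$ invariant (or family $\sw$) applied to the mapping torus of $\alpha_p$ over $S^1$; the larger $p$, the more external stabilizations are required to trivialize the invariant. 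Setting $j_p := \alpha_p \circ j_0$, the three mildness conditions transfer verbatim from $\alpha_p$ to the pair $(j_0, j_p)$, so it suffices to establish smooth non-isotopy of the appropriate pairs of embeddings.

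The heart of the argument, and the main obstacle, is showing that for any $g$ there is a $p$ with $j_0^{gT}$ not smoothly isotopic to $j_p^{gT}$. I would use a tube-trading move: because $T^2 \subset S^4$ is the unknotted torus, the complement of a neighborhood of $\Sigma \cs T^2$ in $Z$ is naturally related to the complement of $\Sigma$ in $Z \cs \sss$, so each internal stabilization of a homologically essential surface behaves, at the level of the complement and of the relevant gauge-theoretic invariants, like one external stabilization of the ambient manifold. Iterating $g$ times, a smooth ambient isotopy between $j_0^{gT}$ and $j_p^{gT}$ in $Z$ would produce, via restriction to complements and an isotopy-extension argument (where the homologically essential hypothesis on $j_0$ enters to provide a rigid framework for extracting the ambient diffeomorphism), a smooth isotopy from $\alpha_p$ to the identity on $Z \cs g(\sss)$. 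For $p = p(g)$ this contradicts the survival property, proving $j_0^{gT}$ and $j_p^{gT}$ are subtly knotted.

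The infinite stabilization diameter then follows formally: all the $j_p$ lie in the single topological isotopy class of $j_0$ since each $\alpha_p$ is topologically isotopic to the identity, while $\delta(j_0, j_{p(g)}) > g$ for every $g$. The most delicate step is the tube-trading correspondence — making precise that one internal stabilization of $\Sigma$ ``is'' one external stabilization of $Z$ at the level of the family invariant, and using homological essentialness to turn an ambient isotopy of surfaces into an ambient isotopy of diffeomorphisms modulo stabilizer of $j_0$. The rest is bookkeeping of how the three mildness conditions on $\alpha_p$ are preserved under the composition $j_p = \alpha_p \circ j_0$.
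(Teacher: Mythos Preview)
Your proposal has a genuine gap at its core: the ``tube-trading'' step is not valid, and in fact the paper's own construction shows it cannot be.  Internal stabilization $(X,\Sigma)\cs(S^4,T^2)$ leaves the ambient manifold unchanged (since $Z\cs S^4=Z$); only the surface changes.  The complement of $j_0^{gT}(\Sigma\cs F_g)$ in $Z$ is not the complement of $j_0(\Sigma)$ in $Z\cs g(\sss)$, and there is no mechanism by which an ambient isotopy between $j_0^{gT}$ and $j_p^{gT}=\alpha_p\circ j_0^{gT}$ in $Z$ yields an isotopy of $\alpha_p$ to the identity on $Z\cs g(\sss)$.  Indeed, the diffeomorphisms $\alpha_p$ actually used in the paper are all \emph{$1$-stably isotopic to the identity}: $\alpha_p\cs 1_{\sss}$ is already smoothly trivial.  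So they die after a single external stabilization, yet the internally stabilized surfaces $j_0^{gT}$ and $j_p^{gT}$ remain distinct for $g$ up to roughly $p$.  This directly refutes the proposed equivalence between one internal stabilization of the surface and one external stabilization of the ambient manifold.

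The paper's argument is entirely different.  One arranges (via Wall's theorems on automorphisms of intersection forms and their realization by diffeomorphisms, together with the standard forms of Table~\ref{t:v-forms}) that $[j_0(\Sigma)]$ pairs nontrivially with the fiber class $B_1$ in a Gompf nucleus sitting inside $Z$.  The diffeomorphisms $\alpha_p$ are commutators built from reflections and a stable diffeomorphism coming from a $(2p+1)$-log transform in that nucleus; one computes, via the Baraglia--Konno family gluing formula and the Morgan--Mrowka--Szab\'o log-transform formula, that $K+2p\mathfrak{t}$ is a $(\Z_2,\alpha_p)$-basic class.  The obstruction to isotopy of $j_0^{gT}$ and $j_p^{gT}$ is then Baraglia's \emph{family adjunction inequality} (Theorem~\ref{bar}) applied directly to the genus-$(g+g(\Sigma))$ surface $j_0^{gT}(\Sigma\cs F_g)$ in $Z$: if the two were isotopic one would get $2(g+g(\Sigma))-2\ge [\Sigma]^2+|(K+2p\mathfrak{t})\cdot[\Sigma]|$, and the right-hand side grows linearly in $p$ because $[\Sigma]\cdot B_1\neq 0$.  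Homological essentialness enters exactly here---to make that pairing nonzero---not to extract an ambient diffeomorphism from an isotopy of surfaces.
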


\begin{remark}
Notice that this theorem does not make any assumption about the fundamental group of the complement of the exterior of the initial surface, and it does not make any assumption about the self-intersection of the surface. If $X$ has a smooth structure to begin with we may take the smooth structure on $Z$ to be the one resulting from the smooth connected sum. In many cases, it suffices to stabilize the manifold $X$ just one time ($N=1$).
\end{remark}

There has been interest in surfaces bounding knots in punctured $4$-manifilds, \cite{MM, MMRS, MMP, pich}. In addition there has been interest in exotic co-dimension one embeddings, \cite{ARcd1,KMT}. Theorem~\ref{TA} and its proof imply corollaries relevant to these issues.

\begin{corollary}\label{anypi1}
There are subtly knotted surfaces in some $4$-manifold with any given (finitely-presented) fundamental group that remain subtly knotted after many internal stabilizations.
\end{corollary}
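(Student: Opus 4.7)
The plan is to reduce \corref{anypi1} to \thmref{TA} by taking a connected sum with a closed smooth $4$-manifold of the desired fundamental group.

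Given a finitely presented group $G$ with $k$ generators and $r$ relators, a standard construction produces a closed smooth $4$-manifold $M_G$ with $\pi_1(M_G) \cong G$: start with $\cs_k(S^1 \times S^3)$ and perform framed surgery along embedded circles representing the relators in a ball. Apply \thmref{TA} to the simply-connected stably smoothable $4$-manifold $X = S^4$ (or any convenient choice), producing a smooth manifold $Z$ homeomorphic to $\cs_N(\sss)$ together with a family $\{j_p : \Sigma \hookrightarrow Z\}$ of subtly smoothly knotted surfaces whose internal stabilization diameter is infinite. Form $W = Z \cs M_G$. Since $Z$ is simply connected, van Kampen gives $\pi_1(W) \cong G$.

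Next I would transport the family $\{j_p\}$ into $W$ through the inclusion $Z \setminus B^4 \hookrightarrow W$, performing the connected sum in a ball disjoint from the surfaces. The witnessing diffeomorphisms $\alpha_p$ from \defref{subtle} may, by an isotopy on $Z$, be arranged to be the identity on a ball containing the site of the connected sum, so each extends to $\widetilde{\alpha}_p = \alpha_p \cs \id_{M_G}$ on $W$. The pseudoisotopy, topological isotopy, and $1$-stable isotopy of $\alpha_p$ to the identity on $Z$ all extend by the identity on $I \times M_G$ (respectively $M_G \cs \sss$), so $\widetilde{\alpha}_p$ inherits these three properties. In particular $j_p = \widetilde{\alpha}_p \circ j_0$ satisfies the positive half of subtle knottedness in $W$.

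The main obstacle, and the only nontrivial ingredient beyond \thmref{TA}, is to verify that $j_p$ and $j_q$ remain smoothly non-isotopic after the connected sum with $M_G$. My plan is to argue that the gauge-theoretic obstruction used in the proof of \thmref{TA} to distinguish $j_p$ from $j_q$ in $Z$ --- whether a family Bauer--Furuta invariant, an equivariant Seiberg--Witten invariant on a branched cover, or the non-triviality of the mapping class of $\alpha_p \alpha_q^{-1}$ in $\pi_0\diff(Z, j_0)$ --- is detected in a region disjoint from the connected sum ball, and so survives summing with any smooth $4$-manifold away from that region. Equivalently, the proof of \thmref{TA} produces an invariant on pairs $(W', j)$ that is additive (or unchanged) under connected sum with an arbitrary closed smooth $4$-manifold performed away from $j(\Sigma)$; applying this to $M_G$ transfers the distinction from $Z$ to $W$. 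The same argument applied to $j_p^{gT}$ in place of $j_p$ yields the stabilized statement, completing the corollary.
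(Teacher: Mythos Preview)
Your reduction has a genuine gap at the step you yourself flag as ``the only nontrivial ingredient.'' The obstruction used in the proof of \thmref{TA} is the family Seiberg--Witten invariant $SW_Z^{\pi_0,\Z_2}(\alpha_p,\mathfrak{s})$, which via the Baraglia--Konno gluing formula is computed from the \emph{ordinary} Seiberg--Witten invariant of the unstabilized piece $Z'_X$. These invariants are global, not local: they do \emph{not} survive connected sum with an arbitrary closed $4$-manifold. Concretely, if your $M_G$ has $b_2^+(M_G)>0$ --- which the surgery construction on $\cs_k(S^1\times S^3)$ does not preclude --- then $SW^{\Z_2}(Z'_X\cs M_G,\cdot)$ vanishes identically, the family invariant of $\widetilde\alpha_p$ vanishes with it, and Baraglia's family adjunction inequality gives no information. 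Your claimed ``additivity under connected sum away from $j(\Sigma)$'' is simply false for this invariant, and none of the alternative invariants you list (family Bauer--Furuta, branched-cover SW, mapping class in $\pi_0\diff$) are known to behave better in this respect.

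The paper avoids this by never taking a connected sum with a mystery manifold. Instead it invokes Gompf's theorem that every finitely presented $G$ is $\pi_1$ of a closed \emph{symplectic} $4$-manifold built by submanifold (fiber) sums of $\Sigma_g\times T^2$ with copies of $E(2)$; such a manifold automatically contains an $N(2)$ nucleus and, by Taubes, has $SW^{\Z_2}(K)=1$. One then runs the entire construction of \thmref{TA} (log transforms, the diffeomorphisms $\alpha_p$, the Baraglia--Konno and MMS computations, and the family adjunction inequality) directly inside this symplectic manifold. The point is that fiber sum preserves the symplectic structure and hence the non-vanishing of SW, whereas connected sum destroys it.
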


\begin{corollary}\label{anyknot}
Given any knot $K$ and any simply-connected, stably smoothable, closed $4$-manifold $X$, there is a  number $N$ and a smooth manifold $Z$ 
homeomorphic to 
 $X\cs N(\sss)$ with the following property. For any smooth, homologically essential (not zero in $H_2(Z\setminus B^4,S^3)$),  embedding $(j_0,K):(\Sigma,\partial\Sigma)\hookrightarrow (Z\setminus B^4,S^3)$ of a once punctured, oriented surface $\Sigma$, there is a collection of smooth embeddings $\{j_0:\Sigma\hookrightarrow Z\setminus B^4\}$ so that $j_p$ and $j_q$ are subtly smoothly knotted. Furthermore, for any $g$ there is a $p$ depending on $g$, so that $j_0^{gT}$ and $j_p^{gT}$ are subtly smoothly knotted. In particular the internal stabilization diameter of any topological isotopy class of homologically essential surfaces in $Z\setminus B^4$ is infinite. 
\end{corollary}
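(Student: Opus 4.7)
The plan is to reduce Corollary~\ref{anyknot} to Theorem~\ref{TA} by capping off the boundary knot with a surface inside the excised ball. Given the punctured embedding $(j_0,K)\colon(\Sigma,\partial\Sigma)\hookrightarrow(Z\setminus B^4,S^3)$, first choose any smooth compact oriented surface $F$ properly embedded in $B^4$ with $\partial F=K$; pushing the interior of a Seifert surface for $K$ slightly into $B^4$ suffices. Let $\widetilde\Sigma=\Sigma\cup_K F$ and set $J_0=j_0\cup F\colon\widetilde\Sigma\hookrightarrow Z$, smoothed along $K$. The excision isomorphism $H_2(Z\setminus B^4,S^3)\cong H_2(Z)$ converts the relative homological essentialness of $j_0$ into the ordinary homological essentialness of $J_0$ in $Z$, so $J_0$ is a legitimate input to Theorem~\ref{TA}.

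Next, apply Theorem~\ref{TA} to the closed embedding $J_0$, obtaining a collection of smooth embeddings $\{J_p\colon\widetilde\Sigma\hookrightarrow Z\}$ and diffeomorphisms $\alpha_p$ of $Z$, each pseudoisotopic, topologically isotopic, and $1$-stably isotopic to the identity with $J_p=\alpha_p\circ J_0$, no two of which are smoothly isotopic. The crucial additional step is to arrange that each $\alpha_p$ is the identity on the ball $B^4\supset F$. The diffeomorphisms produced by the proof of Theorem~\ref{TA} have compact support contained in a region that can be positioned in $Z$, so after conjugating by an ambient isotopy of $Z$ that carries this region out of $B^4$ one may assume $\alpha_p|_{B^4}=\id$. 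Each restricted $\alpha_p$ on $Z\setminus B^4$ retains the three properties in Definition~\ref{subtle}, relative to the boundary $S^3$.

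Define $j_p=\alpha_p|_{Z\setminus B^4}\circ j_0$. Any smooth isotopy of $j_p$ to $j_q$ rel boundary would, by extension through the identity on $F$, produce a smooth isotopy of $J_p$ to $J_q$ in $Z$, contradicting Theorem~\ref{TA}; hence the $j_p$ are pairwise subtly knotted. The internal stabilization statements are inherited directly from those of Theorem~\ref{TA}, since $J_0^{gT}$ and $J_p^{gT}$ agree with the capped-off versions of $j_0^{gT}$ and $j_p^{gT}$ provided the stabilization tori are placed in $Z\setminus B^4$. The main obstacle in this approach is the localization step: one must verify, either by inspection of the proof of Theorem~\ref{TA} or through a separate support-displacement argument, that the diffeomorphisms $\alpha_p$ can be chosen to equal the identity on a prescribed $4$-ball while preserving pseudoisotopy, topological isotopy, and $1$-stable isotopy to the identity.
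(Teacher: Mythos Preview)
Your proposal is correct and follows essentially the same route as the paper: cap off $(j_0,K)$ with a Seifert surface pushed into $B^4$, apply Theorem~\ref{TA} to the resulting closed surface, and then observe that an isotopy of the punctured surfaces would extend across the fixed cap to an isotopy of the closed surfaces. The only difference is in how you handle the localization. The paper simply records that, by construction, the diffeomorphisms $\alpha_p$ produced in the proof of Theorem~\ref{TA} already restrict to the identity on a distinguished $4$-ball (their support lies in the nucleus summed with $S^2\times S^2$), and then declares $B^4$ to be that ball; there is no need for your displacement/conjugation step. Your conjugation argument is not wrong---since $\beta$ is isotopic to the identity, $\beta\alpha_p\beta^{-1}\circ J_0$ is isotopic to $\alpha_p\circ J_0$, so non-isotopy is preserved---but it is an unnecessary detour once one inspects the construction. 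You correctly flag this localization as the crux, and the paper resolves it exactly by the inspection you anticipate.
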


There are also versions for properly-embedded surfaces in many $4$-manifolds with arbitrary fundamental group, and non-empty boundary. The following is an application to smoothly knotted $3$-manifolds.

\begin{corollary}\label{cd1}
Given any $3$-manifold $Y$ with positive first betti number and and simply-connected, stably smoothable, closed $4$-manifold $X$, there is a  number $N$ and a smooth manifold $Z$ 
homeomorphic to 
 $X\cs N(\sss)$ so that there is an infinite family of smooth embeddings of $Y$ in $Z$ all of which are related by diffeomorphisms that are topologiclaly isotopic to the identity, but no two distinct ones are isotopic. 
\end{corollary}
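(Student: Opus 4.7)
The plan is to reduce to Theorem~\ref{TA} by exploiting a non-separating surface inside $Y$. Since $b_1(Y)>0$, there is a primitive class in $H^1(Y;\bz)$, and taking the preimage of a regular value of a representing map $Y\to S^1$ produces a connected, oriented, closed, non-separating surface $\Sigma\subset Y$. The strategy is then to embed $Y$ into a suitable $Z$ so that $\Sigma$ becomes homologically essential, invoke Theorem~\ref{TA} on this $\Sigma$, and push the resulting subtly knotted surfaces back up to embeddings of $Y$.

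The first step is to embed $Y$ smoothly into a closed $4$-manifold $Z$ homeomorphic to $X\cs N(\sss)$ for some $N$, so that the induced embedding $j_0:=i_0|_\Sigma:\Sigma\hookrightarrow Z$ is homologically essential. Using the classical fact that every closed oriented $3$-manifold embeds smoothly in $k(\sss)$ for some $k$, I would take an initial embedding $Y\hookrightarrow k(\sss)$ and set $Z:=X\cs k(\sss)\cong X\cs N(\sss)$ with $N=k$. If $[\Sigma]$ happens to vanish in $H_2(Z)$, one increases $N$ by one and re-embeds $Y$ so that $\Sigma$ is tubed into an $S^2$-factor of a new $\sss$ summand, forcing $[\Sigma]$ to be non-zero while leaving $Y$ diffeomorphically unchanged.

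With $j_0$ homologically essential, Theorem~\ref{TA} supplies an infinite collection $\{j_p\}$ of smooth embeddings with $j_p=\alpha_p\circ j_0$, where the $\alpha_p$ are diffeomorphisms of $Z$ topologically isotopic (as well as pseudoisotopic and $1$-stably isotopic) to the identity, and no pair $(j_p,j_q)$ with $p\neq q$ is smoothly isotopic. Define the embeddings of $Y$ by $i_p:=\alpha_p\circ i_0:Y\hookrightarrow Z$. For $p\neq q$ the composition $\alpha_q\circ\alpha_p^{-1}$ is a diffeomorphism of $Z$ topologically isotopic to the identity that sends $i_p(Y)$ to $i_q(Y)$. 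If $i_p$ and $i_q$ were smoothly isotopic for some $p\neq q$ via an ambient isotopy $\{\beta_t\}$ of $Z$ with $\beta_1\circ i_p=i_q$, then restricting to $\Sigma\subset Y$ would give $\beta_1\circ j_p=j_q$, contradicting the subtle knottedness of $(j_p,j_q)$ supplied by Theorem~\ref{TA}.

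The main obstacle is the first step: arranging that the chosen non-separating surface $\Sigma\subset Y$ is homologically essential in an ambient $Z$ of the prescribed homeomorphism type, without changing the diffeomorphism type of $Y$. Once this is in place, the remainder of the argument is formal, relying on Theorem~\ref{TA} together with the elementary observation that an ambient smooth isotopy of $Z$ restricts to an ambient smooth isotopy of any smooth submanifold.
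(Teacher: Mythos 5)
Your strategy is essentially the paper's: produce a non-separating $\Sigma\subset Y$ from $b_1(Y)>0$, embed $Y$ in $Z$ with $[\Sigma]\neq 0$ in $H_2(Z)$, apply Theorem~\ref{TA} to obtain $\{j_p=\alpha_p\circ j_0\}$, set $i_p=\alpha_p\circ i_0$, and observe that an ambient isotopy carrying $i_p$ to $i_q$ would restrict to one carrying $j_p$ to $j_q$, contradicting Theorem~\ref{TA}. That final reduction, and the role of $b_1(Y)>0$, are exactly as in the paper.

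Where you diverge is the embedding step, which you correctly flag as the crux. The paper does not cite ``$Y$ embeds in $k(\sss)$'' as a black box: it invokes Kaplan to write $Y=\partial W$ with $W$ a Spin $2$-handlebody, deduces from the long exact sequence of the pair $(W,Y)$ and Mayer--Vietoris that $H_2(Y)\to H_2(W)\to H_2(DW)$ is injective, and uses that the double of a Spin $2$-handlebody is $\cs^m(\sss)$. With that construction $[\Sigma]\neq 0$ is automatic and your fallback case never arises. Your tubing fallback is plausible --- replace a standard ball $D^3\subset Y$ meeting $\Sigma$ in an equatorial $D^2$ by a thickened tube running to a push-off of $A_0\setminus D^2$ in a new $\sss$-summand, so that the new pair $(Y',\Sigma')$ is diffeomorphic to $(Y,\Sigma)$ while $[\Sigma']=[\Sigma]\pm[A_0]\neq 0$ --- but to be rigorous you must argue that the thickened tube and thickened $A_0\setminus D^2$ can be chosen disjoint from $Y$ away from the initial circle (work first in a bicollar of $Y$, then outside it), and that removing and regluing a standard $(D^3,D^2)$ pair does not change the pair type of $(Y,\Sigma)$.

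There is also a genuine gap in your identification of the ambient manifold. You set $Z:=X\cs k(\sss)$, but Theorem~\ref{TA} is proved for a \emph{specific} smooth manifold $Z=Z_X'\cs(\sss)$ built in Lemma~\ref{ZX} from an almost completely decomposable Park manifold; in general this $Z$ is only homeomorphic to $X\cs N(\sss)$, and $X\cs k(\sss)$ need not even carry a distinguished smooth structure since $X$ is only assumed stably smoothable. The paper handles this by performing additional submanifold sums with $P_0$ (increasing $N$) so that the theorem's $Z$ acquires enough $\sss$-summands to contain $DW$ minus a ball, at which point the injectivity of $H_2(Y)\to H_2(Z)$ follows. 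Your argument should be phrased so as to embed $Y$ into \emph{that} $Z$ rather than into $X\cs k(\sss)$.
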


\subsubsection{Prior Results}
Surgery theory reduced many problems about high-dimensional manifolds to problems about algebra. It does not apply to smooth $4$-manifolds because it is not always possible to find suitable Whitney disks to remove excess intersection. It is, however effective after stabilizing by taking the connected sum with a sufficient number of copies of
 $\sss$, \cite{CS,quinn:stable}. It is also effective in the topological category for $4$-manifolds with good fundamental groups. Indeed, Wall showed that any two homotopy equivalent, simply-connected smooth $4$-manifolds would become diffeomorphic after taking the connected sum with enough copies of $\sss$ \cite{wall:4-manifolds}. This process is called \emph{stabilization}. Later, Gompf and independently Kreck showed that any pair of smooth, orientable, homeomorphic $4$-manifolds would become diffeomorphic after enough stabilizations, \cite{gompf:stable}.
The existence of exotic smooth structures on $4$-manifolds was discovered between Wall's work and the work of Gompf and Kreck. Shortly after smoothly exotic surfaces, i.e., surfaces that are topologically isotopic, but smoothly distinct were constructed \cite{Finashin1,Finashin2,Finashin3,FKV,FintushelStern1,Kim,KimRuberman1,KimRuberman2, KimRuberman3,HS,Mark,AKMR}. There are many different notions of stabilization for knotted surfaces in addition to the one addressed in this paper. We informally summarize some of these notions using the notation of topological pairs. Given a $4$-manifold -- surface pair $(X,\Sigma)$, an external stabilization is $(X,\Sigma)\cs(\sss,\emptyset)$, an intermediate stabilization is $(X,\Sigma)\cs(\sss,S^2\times\text{pt})$, an internal stabilization is 
$(X,\Sigma)\cs(S^4,T^2)$, and a non-orientable internal stabilization is $(X,\Sigma)\cs(S^4,\R{P}^2)$,  Quinn and Perron  showed that smoothly knotted surfaces would become smoothly isotopic after sufficiently many external stabilizations, \cite{quinn:isotopy,perron:isotopy1,perron:isotopy2}; Gabai showed that that one intermediate stabilization suffices in great generality, \cite{Ga}. Baykur and Sunukjian \cite{baykur-sunukjian:stab} showed that such surfaces in simply-connected manifolds would become smoothly isotopic after enough internal stabilizations, and Kamada showed that they would become equivalent after sufficiently many non-orientable stabilizations \cite{Ka}. Surgery corresponding to ambient internal $1$-handle attachment is a non-local notion of stabilization that is also studied.

For a long time it was open whether one stabilization would always suffice in the various contexts. 
In 2021 Juh\'asz and Zemke proved that there are properly embedded, connected surfaces with non-empty boundary in the $4$-ball with  arbitrarily large ambient internal $1$-handle distance \cite{JZ}.  This paper obtains results about a more restrictive notion of smooth knotting and a more restrictive notion of internal stabilization for (possibly closed) surfaces in (possibly closed) $4$-manifolds.  In 2020 Lin showed that there were exotic diffeomorphisms that remained exotic after one stabilization \cite{Lin-need2}. Based on Lin's example, Lin and Mukherjee showed that there is a pair of disconnected collections of disks properly embedded into the $4$-ball 
that remain smoothly distinct after one external stabilization \cite{LinMuk}. In a remarkable paper this year, Kang showed that there is a pair of smooth structures on a compact $4$-manifold with non-empty boundary that  remain distinct after one stabilization \cite{Kang}. Based on this example, Hayden, Kang and Mukherjee proved that a closely related manifold contains a pair of embedded $2$-spheres that remain distinct after one external stabilization or one internal stabilization \cite{HKM}.

Theorem~\ref{TA} is a refinement and shift from prior investigations of knotted surfaces. The first examples of knotted surfaces could be distinguished by the smooth structure of the complement of the surface. Konno, Mukherjee, and Taniguchi are promoting the idea that the the two relative pairs associated to exotic embedding should be diffeomorphic as pairs, so the the exotic feature is the embedding, not the complement or the pair, \cite{KMT}. Definition~\ref{subtle} takes this idea a bit further.

Theorem~\ref{TA} is stronger than the work of \cite{HKM} in that it shows the the diameter is infinite whereas the former only provides an example where the internal stabilization distance is at least two. The arguments used in  \cite{HKM,JZ}
require the ambient $4$-manifolds to have non-empty boundary. The results in this paper will apply when the ambient $4$-manifold has empty or non-empty boundary. The  surfaces in \cite{JZ} are not related by a diffeomorphism (see Lemma~\ref{not-diff}, whereas the examples constructed in this paper are related by diffeomorphisms that are very close to the identity in three interesting ways. Since this theorem demonstrates that there are surfaces related by one external stabilization, that require many internal stabilizations to become isotopic, it is interesting to consider the converse.

\begin{question}
Are there surfaces related by one internal stabilization that require many external stabilizations to become isotopic?
\end{question}

\begin{remark} Notice that smooth isotopy implies the remaining conditions in Definition~\ref{d1}. It is interesting to consider the extent that the other conditions fail to  imply smooth isotopy. 
By Lemma~\ref{SItoPI} in section~\ref{sec2} $1$-stably isotopic implies pseudoisotopic, so the pseudoisotopy condition is redundant. We include it because we will give a separate proof of this property for our examples that will generalize to the case of higher-dimensional families. 

Similarly, for simply-connected $4$-manifolds a diffeomorphism pseudoisotopic to the identity is topologically isotopic to the identity, \cite{quinn:isotopy}. This means topologically isotopic is redundant in the simply-connected case. However, there are diffeomorphisms of $S^1\times D^3$ that are pseudoisotopic to the identity rel boundary, but not topologically isotopic to the identity.

Lin showed that the Dehn twist on the essential $3$-sphere in $K3\cs K3$ is pseudoisotopic, and topologically isotopic to the identity, but not $1$-stably isotopic to the identity, \cite{Lin-need2}. Watanabe constructed diffeomorphisms that are homotopic, but not isotopic to the identity \cite{watanabe:theta}. Recently, Budney and Gabai have constructed so-called barbell diffeomorphisms on $S^1\times D^3$ that are pseudoisotopic to the identity, but not topologically isotopic to the identity
\cite{budney-gabai-hyp}. This implies that the diffeomorphisms become isotopic after enough stabilizations. The diffeomorphisms are explicit, and Budney and Gabai are working on showing that some of them are $1$-stably isotopic to the the identity.
\end{remark}

\noindent{\bf Acknowledgements}
Conversations with Konno, Mukherjee, Ruberman, and Taniguchi at SL Math suggested the questions addressed in this paper. The extension of the results to cover surfaces of negative self-intersection uses an argument in unpublished work of the author with Kim, Melvin, and Ruberman. Conversations with Budney, Gabai, Piccirillo, Powell and Ruberman helped the author refine the results and presentation.

\section{Basic Construction}\label{sec2} We use the following conventions and definitions.
Homology will always be considered with integer coefficients. The algebraic intersection number defines a symmetric bilinear form on the second homology of a $4$-manifold. For closed $4$-manifolds this form is unimodular. The $4$-manifold is \emph{even} if this form only takes even values. It is \emph{odd} otherwise. The \emph{divisibility} of a non-zero homology class $\alpha$ is the largest integer $d$ for which there is a class $\beta$ with $\alpha = d\beta$. A class is \emph{primitive} if it has divisibility equal to one.  A class $\alpha$ is \emph{characteristic} if $\alpha\cdot\beta \equiv \beta\cdot\beta \ (\text{mod} \ 2)$ for all classes $\beta$. It is called \emph{ordinary} otherwise. By an embedded surface we will mean a smooth embedding $j:\Sigma\hookrightarrow Z$. Everything will be assumed to be smooth unless specifically stated otherwise. 

In this section we describe the fundamental construction, with enough detail for the following special case of the main theorem. In the following section we prove that the surfaces remain smoothly distinct even after many internal stabilizations. In later sections we will refine the construction to obtain our main results. Let $E(2)$ denote the K3 surface. This is the total space of an elliptic fibration $\pi:E(2)\to\cpone$. 
\begin{proposition}\label{A}
In $E(2)\cs(\sss)$ every ordinary, primative homology class of positive self-intersection is represented by an infinite collection of smoothly embedded spheres so that any distinct pair is subtly knotted and the diameter of the collection in the internal stabilization distance is infinite.  
\end{proposition}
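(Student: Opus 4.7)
The plan is to produce the entire infinite collection $\{j_p\}$ as a single orbit $j_p := \phi^p \circ j_0$ under iterates of one carefully chosen diffeomorphism $\phi$ of $Z := E(2) \cs (\sss)$. First I would fix the base embedding: for an ordinary primitive class $\sigma \in H_2(Z)$ with $\sigma \cdot \sigma > 0$, Freedman's classification produces a locally flat topological sphere representative, and the $\sss$ summand provides geometric disks to trade off excess self-intersections via finger moves into the $S^2$-factors, yielding a smooth sphere $j_0 : S^2 \hookrightarrow Z$.

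Next I would construct $\phi$. The building block is a ``cork-twist''-type diffeomorphism supported in a small compact region of the $E(2)$ factor, of the sort that becomes smoothly isotopic to the identity after one external stabilization with $\sss$; extended by the identity across the rest of $Z$, this produces a $\phi$ that is $1$-stably isotopic to the identity by construction. Since $\phi$ acts trivially on $H_2(Z)$ and $\pi_1(Z) = 1$, the Freedman--Quinn classification gives topological isotopy to the identity, and pseudoisotopy then follows from \lemref{SItoPI}. With $j_p := \phi^p \circ j_0$, each pair $(j_p, j_q)$ is related by $\alpha := \phi^{p-q}$, which satisfies all three hypotheses of \defref{subtle}, so subtle knotting is automatic as soon as smooth non-isotopy has been established.

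Smooth non-isotopy of $j_p$ from $j_q$ would follow from a family Seiberg-Witten invariant, computed on a one-parameter family of metrics on the surface complements that an isotopy would produce. The diffeomorphism $\phi$ is built so that its contribution to this invariant is additive in the iterate, giving $\mathrm{FSW}(j_p,j_q) = (p-q)\cdot \mathrm{FSW}(\phi, j_0) \neq 0$ for $p \neq q$.

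The main obstacle is the infinite internal stabilization diameter: for each fixed $g$, one must exhibit $p, q$ such that $j^{gT}_p$ and $j^{gT}_q$ are not smoothly isotopic. Each internal stabilization modifies the exterior of $\Sigma \cs F_g$ by an $S^1 \times D^3$-type change and generally drops the formal dimension of the relevant parameterized moduli space, so the naive family invariant can degenerate after enough stabilizations. To circumvent this I would work with a stratified family of invariants $\mathrm{FSW}_g$ adapted to embeddings of $\Sigma \cs F_g$, together with a Bauer--Furuta-style refinement taking values in an infinitely generated group, and then observe that the iterates $\phi^p$ contribute an unbounded sequence of classes in this range. For each $g$, all but finitely many iterates then remain non-isotopic after $g$ stabilizations, producing pairs $(j_p^{gT}, j_q^{gT})$ with stabilization distance exceeding any prescribed bound, and hence infinite diameter. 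The heart of the proof is the dimension-count and gluing argument verifying that $\mathrm{FSW}_g(\phi^p)$ grows with $p$ despite the dimensional drop caused by each torus handle addition.
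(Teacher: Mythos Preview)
Your proposal has a genuine structural gap: the single-diffeomorphism iterate strategy cannot produce unbounded internal stabilization distance with any known invariant, and the fix you sketch is speculative.

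The paper does \emph{not} use iterates of one $\phi$. Instead it builds an infinite family of diffeomorphisms $\alpha_p$, one for each odd log transform $E(2;2p+1)$, via the commutator $\alpha_p = [\varphi_p, R_{\varsigma_1}R_{\varsigma_{-1}}]$ where $\varphi_p: E(2;2p+1)\cs(\sss)\to E(2)\cs(\sss)$ is the stable diffeomorphism. The point is that each $\alpha_p$ has a \emph{different} set of $(\Z_2,\alpha_p)$-basic classes: specifically $2\ell\mathfrak{t}$ for $0<|\ell|\le p$, coming from the known Seiberg--Witten basic classes of $E(2;2p+1)$ via the Baraglia--Konno family gluing formula. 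Baraglia's one-parameter family adjunction inequality then gives, for the surface $\iota_k$ with $[\iota_k]=A_1+kB_1$, the bound $2g-2 \ge 2k + 2p$ whenever $\kappa_{k,0}^{gT}$ and $\kappa_{k,p}^{gT}$ are isotopic. As $p\to\infty$ the required genus $g$ goes to infinity; that is the entire mechanism.

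Your approach fails at exactly this step. For iterates $\phi^p$ of a fixed $\phi$, Ruberman's invariant is additive, $SW^{\pi_0}(\phi^p,\mathfrak{s}) = p\cdot SW^{\pi_0}(\phi,\mathfrak{s})$, so over $\Z_2$ even powers vanish, and over $\Z$ the set of basic classes is \emph{the same for every iterate}. Hence any adjunction-type bound you extract is independent of $p$, and you get no growth in stabilization distance. Your proposed rescue --- a ``stratified family of invariants $\mathrm{FSW}_g$'' with a Bauer--Furuta refinement in an infinitely generated group --- is not an existing tool; you would have to build such a theory from scratch and prove the requisite gluing and nonvanishing results, which is far harder than the original problem. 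Also, the invariant is computed on the closed manifold $Z$, not on surface complements; internal stabilization does not touch $Z$ at all, so there is no ``drop in formal dimension'' --- what changes is the genus side of the adjunction inequality. Finally, the base embedding is handled concretely in the paper (resolve intersections of $A_0$ with $k$ parallel copies of $B_0$, then move into the nucleus via Wall's realization theorem), not via Freedman plus finger moves.
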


Before presenting the proof of this proposition, we provide the proof that stable isotopy implies pseudoisotopy and the proof that knotted surfaces in $D^4$ cannot have diffeomorphic exteriors. 
\begin{lemma}\label{SItoPI}
If a diffeomorphism restricts to the identity on a ball and becomes isotopic to the identity after some number of stabilizations, then it is pseudoisotopic to the identity.
\end{lemma}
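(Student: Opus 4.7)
The plan is to use the given isotopy $\phi_t$ on $Z \cs N\sss$ (from $\id$ to $\alpha \cs 1_{\sss}^{\cs N}$) together with a five-dimensional straightening argument to extract a pseudoisotopy on $Z$ itself. Write $Z_0 = Z \setminus \interior(B)$ and $C = N\sss \setminus \interior(B')$, so that $Z \cs N\sss = Z_0 \cup_{S^3} C$. Because $\alpha$ restricts to the identity on $B$, the stabilized diffeomorphism $\alpha \cs 1_{\sss}^{\cs N}$ restricts to the identity on $C$. Form the level-preserving self-diffeomorphism $\Phi(t,z) = (t, \phi_t(z))$ of the five-manifold $I \times (Z \cs N\sss)$; by construction $\Phi$ is the identity on $\{0\} \times (Z \cs N\sss)$ and on $\{0,1\} \times C$.

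The core step is to deform $\Phi$ through self-diffeomorphisms of $I \times (Z \cs N\sss)$, rel $\{0,1\} \times (Z \cs N\sss)$, to a diffeomorphism $\widetilde\Phi$ satisfying $\widetilde\Phi|_{I \times C} = \id_{I \times C}$. To do this, first ambient-isotope the embedded submanifold $\Phi(I \times C) \subset I \times (Z \cs N\sss)$ back onto the standardly embedded $I \times C$ rel the end discs $\{0,1\} \times C$; this reduces to an isotopy-extension question for the codimension-one boundary submanifold $\Phi(I \times S^3)$, whose boundary in $\{0,1\} \times S^3$ already agrees with the standard $I \times S^3$. Next compose with an isotopy supported in a collar of $I \times C$ that absorbs the remaining self-diffeomorphism of $I \times C$ (which is the identity on both end discs). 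Crucially, this five-dimensional deformation need not preserve the level $t$-coordinate, which reflects the fact that the outcome is a pseudoisotopy and not an isotopy; were it level-preserving, it would produce the stronger conclusion that $\alpha$ is isotopic to the identity, which need not hold.

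Having arranged $\widetilde\Phi|_{I \times C} = \id$, the diffeomorphism $\widetilde\Phi$ preserves $I \times Z_0$ setwise, is the identity on the side boundary $I \times S^3$, and retains the original top and bottom behavior (identity on $\{0\} \times Z_0$ and $\alpha|_{Z_0}$ on $\{1\} \times Z_0$). Extending by the identity on $I \times B$ produces a self-diffeomorphism $G \colon I \times Z \to I \times Z$ with $G(0,z) = (0,z)$ and $G(1,z) = (1,\alpha(z))$, exhibiting $\alpha$ as pseudoisotopic to the identity.

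The hardest step will be the five-dimensional isotopy used to arrange $\widetilde\Phi|_{I \times C} = \id$: showing that the embedded $\Phi(I \times C)$ can be straightened in the ambient 5-manifold $I \times (Z \cs N\sss)$ rel boundary, and that the residual self-diffeomorphism on $I \times C$ can be absorbed in a collar. Both substeps are accessible via standard tubular-neighborhood and isotopy-extension techniques in dimension five, but some care is required at the corners $\{0,1\} \times S^3$.
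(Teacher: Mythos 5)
Your strategy is genuinely different from the paper's. The paper builds a new $5$-manifold by stacking three pieces: $(I\times Z)$ together with $n$ cancelling $2$-handles attached along circles in $\{1\}\times D^4$ (where $\alpha=\id$), whose upper boundary is $Z\cs n(\sss)$; then $I\times(Z\cs n\sss)$ carrying the pseudoisotopy coming from the given isotopy of $\alpha\cs 1$; then $n$ cancelling $3$-handles attached along the cocores of the $2$-handles. The diffeomorphism is extended over the handles by the identity, and since the $2$- and $3$-handles cancel the composite $5$-manifold is $I\times Z$, yielding the pseudoisotopy directly. Crucially, this construction never has to solve an embedding or straightening problem inside $I\times(Z\cs n\sss)$; it builds the target cobordism abstractly. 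You instead try to work entirely inside $I\times(Z\cs n\sss)$ and ``absorb'' the stabilization by deforming $\Phi(t,z)=(t,\phi_t(z))$ rel $\{0,1\}\times(Z\cs n\sss)$ so that it becomes the identity on $I\times C$.

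There is a gap in that core step. You assert that one can ambiently isotope $\Phi(I\times C)$ (equivalently $\Phi(I\times S^3)$) back to the standard $I\times C$ rel the end discs $\{0,1\}\times C$, and describe this as ``an isotopy-extension question.'' But isotopy extension promotes a known isotopy of a submanifold to an ambient one; it does not produce the submanifold isotopy in the first place, and producing it here is exactly the nontrivial content. The obvious candidate isotopy $F_s(t,z)=(t,\phi_{(1-s)t}(z))$ retracts $\Phi|_{I\times C}$ to the inclusion but does not fix $\{1\}\times C$ for $0<s<1$, so it is not rel boundary; it is not evident (and you offer no argument) that a rel-boundary isotopy of $\Phi(I\times S^3)$ to the standard $I\times S^3$ exists. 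Knowing the two complementary pieces $\Phi(I\times Z_0)$ and $I\times Z_0$ are abstractly diffeomorphic products does not by itself give ambient isotopy rel corners. Unless you supply a genuine argument for this straightening (say via a handle or Cerf-theoretic argument), the proposal does not establish the lemma; the paper's handle-cancellation construction is designed precisely to avoid having to prove such a statement.
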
 
\begin{proof}
Let $\alpha:Z\to Z$ be a diffeomorphism that is the identity on $D^4\subset Z$. It extends to $I\times Z$ trivially on the $I$-factor, and to the result of adding a collection of $5$-dimensional $2$-handles with attaching circles in $D^4$ as the identity on the handles. If the framings are chosen correctly the upper boundary will be $Z\cs n(\sss)$. By hypothesis $\alpha\cs 1_{n(\sss)}$ is isotopic to the identity, so there is a diffeomorphism $G: I\times (Z\cs n(\sss)) \to I\times (Z\cs n(\sss))$ that restricts to $\alpha\cs 1_{n(\sss)}$ on the lower boundary and the identity on the upper boundary. This extends via the identity to the manifold obtained by attaching $5$-dimensional $3$-handles attached to the upper boundary along the cocores of the $2$-handles that were added in the prior step. Stacking these diffeomorphisms gives a diffeomorphism on $I\times Z$ that restricts ot $\alpha$ on the lower boundary and the identity on the upper boundary. The large manifold is $I\times Z$ because the $2$ and $3$ handles cancel. The same proof works if $Z$ has boundary and $\alpha$ restricts to the identity on the boundary.
\end{proof}

The following lemma follows a comment from to the author from Lisa Piccirillo and was the subject of a Math Overflow post, \cite{mo}.
\begin{lemma}\label{not-diff}
If the  embeddings $j, j':\Sigma\to D^4$ are related by a diffeomorphism that is the identity on the boundary then the embeddings are isotopic rel boundary. 
\end{lemma}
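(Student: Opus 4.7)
The plan is to apply the Alexander trick to $\phi$, with a preliminary general-position move on the surface to ensure smoothness of the resulting family of embeddings.

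First I would replace $\phi$ by an isotopic diffeomorphism (through diffeomorphisms of $D^4$ fixing $\partial D^4$ pointwise) that is the identity on a collar neighborhood of $\partial D^4$; this is a standard reduction and does not alter the isotopy class of $\phi\circ j$. Next define
\[
\phi_t(x) \;=\; \begin{cases} t\,\phi(x/t), & |x|\le t,\\ x, & |x|\ge t,\end{cases} \qquad t\in(0,1],
\]
with $\phi_0:=\id$. The collar assumption ensures that each $\phi_t$ is a smooth diffeomorphism of $D^4$ fixing $\partial D^4$, that the family is smooth in $(x,t)$ for $t>0$, and that $\phi_0=\id$ and $\phi_1=\phi$.

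The one obstacle is a possible failure of $C^\infty$-continuity at $t=0$: if $\phi(0)\neq 0$, then $\partial_t\phi_t(0)=\phi(0)\neq 0$, while $\phi_t$ is locally constant in $t$ for $x\neq 0$ and small $t$, so $\partial_t\phi_t$ has a jump at $(0,0)$. To avoid this I use $\dim\Sigma=2<4=\dim D^4$ and general position to replace $j$ by an isotopic embedding $\tilde j$ (rel~$\partial\Sigma$) whose image avoids a ball $B_\varepsilon(0)$ about the origin. For $t\leq\varepsilon$, every $x\in\tilde j(\Sigma)$ satisfies $|x|>t$, so $\phi_t(x)=x$ and $\phi_t\circ\tilde j=\tilde j$; hence $t\mapsto\phi_t\circ\tilde j$ is an honest smooth isotopy from $\tilde j$ to $\phi\circ\tilde j$.

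Finally, I concatenate: $j\sim\tilde j$ by construction, $\tilde j\sim\phi\circ\tilde j$ by the Alexander family $\phi_t\circ\tilde j$, and $\phi\circ\tilde j\sim\phi\circ j=j'$ by applying $\phi$ to the reverse of the first isotopy. Each piece is rel~$\partial\Sigma$ because $\phi_t$ is the identity on $\partial D^4\supseteq j(\partial\Sigma)$. The main technical difficulty is exactly the non-smoothness of the Alexander isotopy at $(0,0)$; the general-position step is chosen precisely to sidestep this point.
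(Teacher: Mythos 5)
Your proof is correct and follows essentially the same strategy as the paper's: both first arrange that the diffeomorphism is the identity on a collar, both push the surface off a small ball about the origin by general position, and both then use a radial contraction of the diffeomorphism's support into that small ball to produce the desired isotopy. The paper phrases the contraction as conjugation of $\alpha$ by a radial expansion $H_s$ (stopping at a finite scale, so the family of diffeomorphisms is everywhere smooth), whereas you run the Alexander family $\phi_t$ all the way to $t=0$ and observe that only its restriction to $\tilde j(\Sigma)$ needs to vary smoothly — the same mechanism with slightly different bookkeeping around the potential non-smoothness at the origin.
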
 
\begin{proof}
It suffices to prove the result for embeddings that are related by a diffeomorphism that is the identity in a neighborhood of the boundary. Furthermore, we may assume that $.1D^4\cap(j(\Sigma)\cup j'(\Sigma)) = \emptyset$.
Let $\alpha:D^4\to D^4$ be such a diffeomorphism with $j'=\alpha\circ j$. Let $H:I\times D^4 \to I\times D^4$ be an isotopy rel the collar of the boundary (which we take to be $D^4\setminus .9D^4$), that maps radial lines to themselves, has $H_0 = 1_{D^4}$ and $H_1(.1D^4) = .9D^4$. One sees that $H_s^{-1}\circ\alpha\circ H_s\circ j$ is an isotopy between $j'$ and $j$ which extends to an ambient isotopy by the isotopy extension theorem.
\end{proof}

A Gompf $N(2)$ nucleus is a regular neighborhood of a certain $2$-complex  that embeds into $E(2)$ \cite{gompf:nuc}. This $2$-complex is the union of a cusp fiber with a  section. In fact, $E(2)$ contains three disjoint such nuclei $N_1, N_2, N_3$. In each let $T_a$, $a=1, 2, 3$ denote a generic fiber in a nucleus. This is an embedded torus of self-intersection zero. Similarly, each nucleus contains a section $\sigma_a$, $a=1, 2, 3$. These are embedded spheres of self-intersection $-2$, \cite{gompf-mrowka}. Let $A_a$ denote the homology class represented by $T_a + \sigma_a$, and $B_a$ be the homology class represented by $T_a$. It is known that intersection form on the second homology of $E(2)$ is given by $-E_8\oplus -E_8\oplus H_1\oplus H_2\oplus H_3$ where $-E_8$ denotes the negative-definite, even, unimodular form of rank $8$, and $H_a$ is the hyperbolic form generated by $\{A_a,B_b\}$. Let $A_0$ denote the sphere $S^2\times\{*\}$ and $B_0$ denote the sphere $\{*\}\times S^2$ in $S^2\times S^2$. We will use the same notation to denote the homology classes. 

The first manifold and embedded surfaces we consider are 
\[
Z = E(2)\cs(\sss), \ \text{and} \ \varsigma_k:S^2\hookrightarrow Z,
\]
where $[\varsigma_k] = A_0 + kB_0$. The explicit embedding is obtained by resolving the intersections in the union of one copy of $A_0$ with $k$ parallel copies of $B_0$. Notice that a parallel copy of $B_0$ will intersect $\varsigma_k(S^2)$ in exactly one point, and $\varsigma_k(S^2)$ has self-intersection equal to $2k$. This demonstrates that the class $[\varsigma_k]$ is primitive, ordinary, and $\pi_1(Z\setminus\varsigma_k(S^2)) = 1$.

\begin{remark}\label{N2-comp}
For later extensions, it will be useful to notice that all of the arguments in this section apply equally well to $(E(2)\cs(\sss)) \setminus N_3$.
\end{remark}

By \cite{wall:diffeomorphisms} there is a diffeomorphism $\zeta:Z\to Z$ so that $\zeta_*A_0 = A_1$ and $\zeta_*B_0 = B_1$.
We define embeddings
\[
\iota_k:S^2\hookrightarrow Z, \quad \iota_k = \zeta\circ \varsigma_k\,.
\]

As subtly smoothly knotted surfaces are related by diffeomorphisms we need to construct some interesting diffeomorphisms. We begin with reflections in spheres of self-intersection $\pm 2$, which we now review. The antipodal map on the two-sphere induces an orientation-preserving map on the tangent bundle to the two-sphere. The unit tangent sphere bundle of the two-sphere may be identified with the three-dimensional rotation group. Indeed, given a point on the sphere together with a unit tangent vector, one may then append the cross-product of the radius with the tangent vector to arrive at an oriented, orthonormal $3$-frame. Of course, the sphere bundles of any radius are also identified with this rotation group. The derivative of the antipodal map acts as $(-I_{2\times 2})\oplus (1)$ on these spheres. The reflection in the self-intersection $+2$ sphere at the zero section of the unit disk bundle of the
tangent bundle to $S^2$, is a smooth map that acts as the antipodal map on the zero section, as  $(-I_{2\times 2})\oplus (1)$ on the sphere bundles of radii in $(0,1/2]$, it is then extended via a family of rotations connecting $(-I_{2\times 2})\oplus (1)$ to $I_{3\times 3}$ on spheres of radii in $[1/2,1]$. We denote the unit disk bundle to $S^2$ by $DT$ and this reflection by $R:DT\to DT$.

Given an embedded sphere $\varsigma:S^2\to X$ of self-intersection $+2$ (or $-2$) there is an orientation preserving (reversing) diffeomorphism of $DT$ to the normal bundle of $\varsigma$. 
The reflection in the sphere $\varsigma(S^2)$, (denoted $R_\varsigma$) is defined to be $R$ in the tubular neighborhood extended by the identity elsewhere. To construct interesting diffeomorphisms, we first construct interesting spheres. The interesting spheres in turn arise from exotic smooth structures. 

Let $E(2;2p+1)$ be the result of performing a multiplicity $(2p+1)$-log transform in the first nucleus $N_1$. Freedman's classification theorem implies that there is a homeomorphism $\psi_p:E(2;2p+1)\to E(2)$, \cite{freedman}. In fact we may take $\psi$ to be a homeomorphism relative to $N_3$, indeed even relative to the complement of $N_1$ and the transformed copy of $N_1$ which we will denote by $N(2;2p+1)_1$. 
\begin{remark}\label{log-trans-H}
It is worth taking a moment to identify specific generators in the homology. Figure~\ref{N2_2} is a figure of the transformed nucleus. Denote the class of the $0$-framed handle by $F_{2p+1}$, and the class of the $(-8p^2-6p-2)$-framed handle by $S$. The surface representing the class $F_{2p+1}$ is called the \emph{multiple fiber}.
One directly checks that the other intersection number is $F_{2p+1}\cdot S = 1$. This implies that the boundary is a homology sphere. Furthermore the boundaries of  $N(2;2p+1)_1$  and $N_1$ agree. The map taking $F_{2p+1}$ to $T_1$,  and $S$ to $\sigma_1 - (4p^2+3p)T_1$ induces an isomorphism between these even intersection forms and allows one to conclude via Freedman's work that there is homeomorphism rel boundary inducing this map on homology. 
\end{remark}

\begin{figure}[ht]
\labellist
\small\hair 2pt
\pinlabel {$-2$} [ ] at 23 87
\pinlabel{$2p\!+\!1$} [ ] at 82 27
\pinlabel {$-8p^2-6p-2$} [ ] at 164 27
\pinlabel{$0$} [ ] at 218 50
\endlabellist
\centering
\includegraphics[scale = 1]{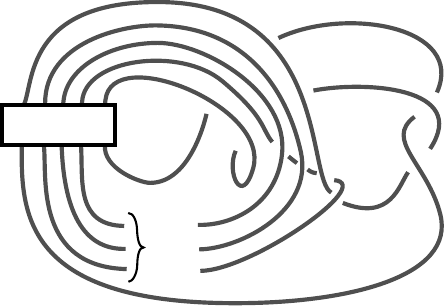}
  \caption{The Gompf nucleus $N_1$ after a $(p+1)$-log transform: (Figure by Ruberman)}\label{N2_2}.
\end{figure}

By a $5$-dimensional cobordism argument we note that there is a diffeomoprhism \hfill\newline $\varphi_p:E(2;2p+1)\cs(\sss)\to E(2)\cs(\sss)$, \cite{mandelbaum:irrational,mandelbaum,mandelbaum,gompf:elliptic}. By \cite{wall:diffeomorphisms} we may assume that 
$\psi_p\cs 1_\sss$ and $\varphi_p$ induce the same map on homology. We may even take $\varphi_p$ to be the identity outside of the sum of the nucleus with $\sss$.

The main diffeomorphisms we use to construct subtly knotted surfaces are the maps
\[
\alpha_p:Z\to Z, \quad \alpha_p = [\varphi_p, R_{\varsigma_1}R_{\varsigma_{-1}}],.
\]
We have 
\begin{align*}
\alpha_p &=  [\varphi_p, R_{\varsigma_1}R_{\varsigma_{-1}}] \\
&= \varphi_p R_{\varsigma_1}R_{\varsigma_{-1}} \varphi_p^{-1} R_{\varsigma_{-1}}^{-1}R_{\varsigma_1}^{-1} \\
&=  R_{\varphi_p\varsigma_1}R_{\varphi_p\varsigma_{-1}}  R_{\varsigma_{-1}}^{-1}R_{\varsigma_1}^{-1} \\
&=  R_{\varphi_p(\psi_p\cs 1_{\sss})^{-1}\varsigma_1}R_{\varphi_p(\psi_p\cs 1_{\sss})^{-1}\varsigma_{-1}}  R_{\varsigma_{-1}}^{-1}R_{\varsigma_1}^{-1}
\end{align*}
It follows from \cite{SKRAM} that $\varphi_p(\psi_p\cs 1_{\sss})^{-1}\varsigma_{1}$ and $\varsigma_1$ become smoothly isotopic after one external stabilization, i.e., taking the connected sum $Z\cs({\sss})$ away from the surfaces. The same holds for $\varphi_p(\psi_p\cs 1_{\sss})^{-1}\varsigma_{-1}$ and $\varsigma_{-1}$. It follows that $\alpha_p\cs 1_{\sss}$ is smoothly isotopic to the identity. 
\begin{remark}
The map $\varphi_p(\psi_p\cs 1_{\sss})^{-1}\varsigma_{1}$ is smooth even though $\psi$ is only a homeomorphism. the point is that $(\psi_p\cs 1_{\sss})^{-1}$ restricts to the identity on the image of $\varsigma_1$. We use analogues of this observation in other places in the construction.
\end{remark}
As remarked after the definition of subtly knotted, the stable isotopy implies that $\alpha$ is topologically isotopic and pseudoisotopic to the identity. One can see the same facts via the following argument.
Since $\varphi_p$ and $(\psi_p\cs 1_{\sss})^{-1}$ induce the same map on homology, work of Freedman and Quinn implies that $\varphi_p(\psi_p\cs 1_{\sss})^{-1}$ is topologically isotopic to the identity, \cite{freedman,quinn:isotopy}. It follows that $\alpha_p$ is topologically isotopic to the identity.  By \cite{sun} $\varphi_p(\psi_p\cs 1_{\sss})^{-1}\varsigma_{-1}$ is smoothly concordant to $\varsigma_{-1}$ since these are both genus zero surfaces in the same homology class. The same holds for $\varphi_p(\psi_p\cs 1_{\sss})^{-1}\varsigma_{1}$ and $\varsigma_1$. We conclude that $\alpha_p$ is pseudoisotopic to the identity. As we remarked earlier $\varsigma_1$ represents a primative, ordinary class, and has simply-connected complement. The same holds for 
$\varphi_p(\psi_p\cs 1_{\sss})^{-1}\varsigma_{1}$   which is in the same homology class. 

Notice that the fact that $E(2;1) = E(2)$ means we can take $\alpha_0$ to be the identity map. Now define
\[
\kappa_{k,p}:S^2\hookrightarrow Z, \ \text{by} \ \kappa_{k,p} = \alpha_p\circ \iota_k\,.
\]
Combine this with Wall's results that any pair of primitive, ordinary classes of the same self-intersection are related by an automorphism of the intersection form, \cite{wall:ortho}, and any automorphism of the intersection form of a once stabilized, indefinite manifold is represented by a diffeomorphism \cite{wall:diffeomorphisms}. Notice that when $j$ and $j'$ are subtly knotted with the required diffeomorphism denoted $\alpha$ and $f:Z\to Z$ is any diffeomrophism, then $f\circ j $ and $f\circ j'$ are smoothly knotted with the required diffeomorphim equal to $f\alpha f^{-1}$.  This completes the construction of the embeddings, the diffeomorphisms, topological isotopies, pseudoisotopies, and stabilized smooth isotopies required to establish Proposition~\ref{A}.

\section{Distinguishing the surfaces}
We use parameterized Seiberg-Witten gauge theory to distinguish the embeddings $\kappa_p$ even after many internal stabilizations as required to establish Proposition~\ref{A}. Let $\mathfrak{s}$ be a $\text{Spin}^c$ structure on $Z$ that is preserved by a diffeomorphism $\alpha$. Further assume that the expected dimension of the Seiberg-Witten moduli space is $-1$. In this case Ruberman defined
\[
SW_Z^{\pi_0,\Z_2}(\alpha,\mathfrak{s}) = \#\mathcal{M}^{SW}_{\mathfrak{s}}(\{g_t,\eta_t\}) \ \text{mod} \ 2,
\]
where $\{g_t,\eta_t\}$ is a generic family of metrics and perturbations satisfying $g_1 = \alpha^*g_0$ and $\eta_1 = \alpha^*\eta_0$, \cite{ruberman:swpos}. This invariant may be generalized in various ways. Call $c_1(\mathfrak{s})$ a $(\Z_2,\alpha)$-basic class if  
$SW_Z^{\pi_0,\Z_2}(\alpha,\mathfrak{s}) \neq 0$. One may also define the associated mapping torus by
\[
S^1\times_\alpha Z = I \times Z/\sim, \ \text{where} \ (1,z) \sim (0,\alpha(z)),.
\]
This is a family of $4$-manifolds. One may define a family Seiberg-Witten invariant for a bundle $X\hookrightarrow E \to B$ by
\[
FSW^{\Z_2}(E,\mathfrak{s}) = \#\mathcal{M}^{SW}_{\mathfrak{s}}(\{g_\theta,\eta_\theta\}) \ \text{mod} \ 2.
\]
A more precise description of this invariant and several refinements of it may be found in \cite{BK}. It should be clear from the above description that 
\[
SW_Z^{\pi_0,\Z_2}(\alpha,\mathfrak{s})  = FSW^{\Z_2}(S^1\times_\alpha Z,\mathfrak{s}). 
\]

Let  $\{g_t,\eta_t\}$ be a generic family of metrics and perturbations satisfying $g_1 = \alpha^*g_0$ and   $\eta_1 = \alpha^*\eta_0$. Let  $\{g'_t,\eta'_t\}$ be a generic family of metrics and perturbations satisfying $g'_1 = \beta^*g'_0$ and   $\eta'_1 = \beta^*\eta'_0$ with $g'_0 = \alpha^*g_0$ and $\eta'_0=\alpha^*\eta_0$. The first family is suitable for computing  
$SW_Z^{\pi_0,\Z_2}(\alpha,\mathfrak{s})$, the second is suitable for computing $SW_Z^{\pi_0,\Z_2}(\beta,\mathfrak{s})$. Taking the concatenation gives a family suitable for computing $SW_Z^{\pi_0,\Z_2}(\alpha\beta,\mathfrak{s})$ and allows one to conclude \cite{ruberman:swpos} that
\[
SW_Z^{\pi_0,\Z_2}(\alpha\beta,\mathfrak{s}) = SW_Z^{\pi_0,\Z_2}(\alpha,\mathfrak{s}) + SW_Z^{\pi_0,\Z_2}(\beta,\mathfrak{s})\,.
\]  
We conclude that 
\[
SW_Z^{\pi_0,\Z_2}(\alpha_p,\mathfrak{s}) = SW^{\pi_0,\Z_2}_Z( R_{\varphi_p\varsigma_1}R_{\varphi_p\varsigma_{-1}},\mathfrak{s}) + SW_Z^{\pi_0,\Z_2}( R_{\varsigma_1}R_{\varsigma_{-1}},\mathfrak{s}).
\]

It is clear from the description of $R_{\varsigma_{\pm 1}}$ that the induced map on homology is 
\[
R_{\varsigma_{\pm 1}*}(x) = x \mp (x\cdot [\varsigma_{\pm 1}])[\varsigma_{\pm 1}].
\]
It follows that $(R_{\varsigma_1}R_{\varsigma_{-1}})_*$ acts as $-1$ on $H^2(\sss)$ and hence acts as $-1$ on $H^2_+(\sss)$. Thus the first Stiefel-Whitney class of the associated bundle is 
\[
w_1(S^1\times_{(R_{\varsigma_1}R_{\varsigma_{-1}})_*}H^2_+(\sss))[S^1] = 1.
\] 
It follows from the main theorem of \cite{BK} that 
\[
SW_Z^{\pi_0,\Z_2}( R_{\varsigma_1}R_{\varsigma_{-1}},\mathfrak{s}) = FSW^{\Z_2}( S^1\times_{R_{\varsigma_1}R_{\varsigma_{-1}}}E(2)\cs (\sss),\mathfrak{s}) = SW^{\Z_2}(E(2),\mathfrak{s})\,, \ \text{and}
\]
\begin{small}
\[
SW_{E(2;2p+1)\cs\sss}^{\pi_0,\Z_2}( R_{\varsigma_1}R_{\varsigma_{-1}},\mathfrak{s}) = FSW^{\Z_2}( S^1\times_{R_{\varsigma_1}R_{\varsigma_{-1}}}E(2;2p+1)\cs (\sss),\mathfrak{s}) = SW^{\Z_2}(E(2;2p+1),\mathfrak{s})\,, 
\]
\end{small}
where $\mathfrak{s}$ restricts to the standard Spin structure on $\sss$.

Using the unique Spin structure on $E(2;2p+1)$, one may identify $\text{Spin}^c$ structures on $E(2;2p+1)$ with a subset of 
$H^2(E(2;2p+1))$. Let $2\mathfrak{t}$ represent both the cohomology class Poincar\'e dual to $2[T_1] = 2[F_{2p+1}]$ and the corresponding 
$\text{Spin}^c$ structure. (See Remark~\ref{log-trans-H}.) By \cite[Cor 1.4]{MMS}, $SW^{\Z_2}(E(2;2p+1),2\ell\mathfrak{t}) = 1$ precisely when $|\ell| \le p$. Recall that the cohomology classes with non-vanishing mod two Seiberg-Witten invariants are called \emph{mod two basic classes}. Thus, $2\ell\mathfrak{t}$,  $|\ell| \le p$ are the mod two basic classes for $E(2;2p+1)$. 

To put these results together, we use the following observation.
\begin{lemma}
If $\rho:X\to X$, and $\varphi:X\to Y$ are diffeomorphisms, there is an isomorphism of families
\[
\Phi: S^1\times_\rho X \to S^1\times_{\varphi\rho\varphi^{-1}} Y, \ \text{given by} \ \Phi([t,x]) = [t,\varphi(x)]. 
\] 
\end{lemma}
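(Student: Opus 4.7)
The plan is to verify three things in order: that $\Phi$ descends to a well-defined map on the quotient, that it is smooth with a smooth inverse, and that it commutes with projection to $S^1$ so that it is a genuine isomorphism of families (i.e.\ a bundle map covering the identity).

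First, I would check well-definedness by chasing the equivalence relations. On the source, the only identification is $(1,x)\sim_\rho(0,\rho(x))$, and on the target, $(1,y)\sim_{\varphi\rho\varphi^{-1}}(0,\varphi\rho\varphi^{-1}(y))$. Applying $\Phi$ to the two representatives of a source class gives $(1,\varphi(x))$ and $(0,\varphi(\rho(x)))$. Using $y=\varphi(x)$ on the target, the first point is equivalent to $(0,\varphi\rho\varphi^{-1}(\varphi(x)))=(0,\varphi(\rho(x)))$, which matches the image of the second representative. So $\Phi$ descends.

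Next, I would exhibit the inverse explicitly by $\Psi([t,y])=[t,\varphi^{-1}(y)]$ and check well-definedness the same way: $(1,y)$ and $(0,\varphi\rho\varphi^{-1}(y))$ are sent to $(1,\varphi^{-1}(y))$ and $(0,\rho\varphi^{-1}(y))$, which are identified in the source because $(1,x)\sim_\rho(0,\rho(x))$ with $x=\varphi^{-1}(y)$. A direct composition shows $\Psi\circ\Phi$ and $\Phi\circ\Psi$ are the identities. Smoothness of both $\Phi$ and $\Psi$ follows from smoothness of $\varphi,\varphi^{-1}$ together with the fact that the mapping-torus quotient maps are local diffeomorphisms on the interior of $I\times X$ and $I\times Y$, and smoothness across the identification seam is exactly what the gluing $(1,x)\sim(0,\rho(x))$ was built to encode.

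Finally, both mapping tori fiber over $S^1$ via $[t,x]\mapsto[t]$ and $[t,y]\mapsto[t]$, and by construction $\Phi$ preserves the $I$-coordinate, so it covers the identity on $S^1$. This makes $\Phi$ a diffeomorphism of total spaces intertwining the projections, which is exactly an isomorphism of smooth families of $4$-manifolds over $S^1$. I do not anticipate a real obstacle here; the only subtle point is making sure the equivalence classes transform correctly under conjugation, and the identity $\varphi\rho\varphi^{-1}\circ\varphi=\varphi\circ\rho$ makes this immediate.
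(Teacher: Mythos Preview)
Your proof is correct. The paper actually states this lemma as an observation without supplying any proof; your verification that $\Phi$ is well-defined on equivalence classes, has the obvious inverse $\Psi([t,y])=[t,\varphi^{-1}(y)]$, is smooth, and covers the identity on $S^1$ is exactly the standard check one would make, and nothing more is needed.
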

\noindent
This lemma gives
\begin{align*}
SW^{\pi_0,\Z_2}( &R_{\varphi_p\varsigma_1}R_{\varphi_p\varsigma_{-1}},2\ell\left((\psi_p\cs1_\sss)^{-1}\right)^*\mathfrak{t}) \\ 
& =
FSW^{\Z_2}(S^1\times_{R_{\varphi_p\varsigma_1}R_{\varphi_p\varsigma_{-1}}}E(2)\cs\sss,2\ell\left((\psi_p\cs1_\sss)^{-1}\right)^*\mathfrak{t}) \\
&= 
FSW^{\Z_2}(S^1\times_{R_{\varsigma_1}R_{\varsigma_{-1}}}E(2;2p+1)\cs\sss,2\ell\varphi_p^*\left((\psi_p\cs1_\sss)^{-1}\right)^*\mathfrak{t}) \\
&= 
SW^{\pi_0,\Z_2}({R_{\varsigma_1}R_{\varsigma_{-1}}},2\ell\mathfrak{t}) = SW^{\Z_2}(E(2;2p+1),2\ell\mathfrak{t}).
\end{align*}

It now follows from the homomorphism property that $SW^{\pi_0,\Z_2}(\alpha_p,2\ell\mathfrak{t}) = 1$ for $\ell$ with 
$0 < |\ell| \le p$. In particular, $c_1(2p\mathfrak{t})$ is a $(\Z_2,\alpha_p)$-basic class. Baraglia proved the following family adjunction inequality for one-dimensional families.
\begin{theorem}[Baraglia]\cite{Bar}\label{bar}
If $\alpha:Z\to Z$ is a diffeomorphism, and $\Sigma \subset Z$ is a surface of positive self-intersection that is smoothly isotopic to $\alpha(\Sigma)$, and $c$ is a $(\Z_2,\alpha)$-basic class, then
\[
2g(\Sigma) - 2 \ge \Sigma\cdot\Sigma + |c[\Sigma]|\,.
\]
\end{theorem}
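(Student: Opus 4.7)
The plan is to adapt the classical Seiberg-Witten adjunction inequality (Kronheimer-Mrowka, Morgan-Szab\'o-Taubes) to the one-parameter family setting. By hypothesis, some $\Spinc$ structure $\mathfrak{s}$ on $Z$ with $c_1(\mathfrak{s}) = c$ is preserved by $\alpha$ and has $FSW^{\Z_2}(S^1 \times_\alpha Z, \mathfrak{s}) \neq 0$, so the parametrized Seiberg-Witten moduli space over the mapping torus $E = S^1 \times_\alpha Z$ is non-empty (odd count) for generic families of metrics and perturbations.

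First I would use the smooth isotopy $\Sigma \simeq \alpha(\Sigma)$ to build an embedded $\Sigma$-bundle $\mathcal{S} \hookrightarrow E$ by concatenating the isotopy with its $\alpha$-translate to produce a loop of embedded surfaces in $Z$, which sweeps out the desired family subsurface in the mapping torus. Next, to reduce the positive self-intersection hypothesis to the zero case, I would perform a family blow-up: take $\widetilde{Z} = Z \cs (\Sigma\cdot\Sigma)\cptwobar$ with the connected sum region disjoint from $\mathcal{S}$, extend $\alpha$ by the identity to $\widetilde{\alpha}$, and replace $\Sigma$ by its proper transform $\widetilde{\Sigma}$, which has self-intersection zero. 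The family blow-up formula for $FSW^{\Z_2}$ ensures that $\widetilde{c} = c + \sum \varepsilon_i e_i$, with signs chosen so that $|\widetilde{c}[\widetilde{\Sigma}]| = |c[\Sigma]| + \Sigma\cdot\Sigma$, remains a $(\Z_2,\widetilde{\alpha})$-basic class. It therefore suffices to prove $2g(\widetilde{\Sigma}) - 2 \geq |\widetilde{c}[\widetilde{\Sigma}]|$ for a zero-self-intersection family surface.

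Finally I would perform a family neck-stretch along $\partial \nu(\widetilde{\mathcal{S}})$, a circle bundle over $\widetilde{\mathcal{S}}$. Kronheimer-Mrowka's curvature-positivity estimate shows that, in the self-intersection zero case, any monopole on the tubular neighborhood $\Sigma \times D^2$ must be reducible on the end whenever $|\widetilde{c}[\widetilde{\Sigma}]| > 2g(\Sigma) - 2$. Running this estimate fiberwise over $S^1$ forces the parametrized moduli space on the stretched manifold to be empty for generic family data, contradicting the non-vanishing of the family invariant. The main obstacle is the parametrized analysis: one must establish family compactness and transversality for Seiberg-Witten monopoles under neck-stretching, together with a family version of the reducible-on-the-neck argument that remains uniform in the $S^1$-parameter so that no exceptional fiber invalidates the estimate. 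This parametrized gauge theory framework is the heart of Baraglia's paper \cite{Bar}.
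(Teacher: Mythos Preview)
The paper does not provide its own proof of this statement; Theorem~\ref{bar} is simply quoted from Baraglia's paper \cite{Bar} and used as a black box. So there is nothing in this paper to compare your argument against.

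That said, your outline is a faithful sketch of the strategy in \cite{Bar}: build the $\Sigma$-subbundle of the mapping torus from the isotopy, blow up in a family to reduce to the square-zero case via the family blow-up formula, and then run a neck-stretching argument along the circle bundle $\partial\nu(\widetilde{\mathcal S})$ to force emptiness of the parametrized moduli space when the inequality is violated. The one place where you are glossing over something nontrivial is the construction of the $\Sigma$-subbundle: a smooth isotopy from $\Sigma$ to $\alpha(\Sigma)$ gives a path of embeddings, and closing it up via $\alpha$ yields a $\Sigma$-bundle inside $S^1\times_\alpha Z$, but one must check that the normal bundle of this family surface is the pullback of the normal bundle of $\Sigma$ (so that the family blow-up and neck geometry are uniform over $S^1$). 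This is handled in \cite{Bar}, but it is worth flagging as more than a formality. Otherwise your identification of the ``main obstacle'' as the parametrized compactness and transversality analysis is exactly right, and is indeed the technical core of Baraglia's paper rather than of the present one.
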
  

The prior section established that $\kappa_{k,0}$ and $\kappa_{k,p}$ are related by a diffeomorphism that is trivial in the three senses of topological isotopy, pseudoisotopy, and $1$-external stable smooth isotopy. It follows that the repeated internal stabilizations
$\kappa_{k,0}^{gT}$ and $\kappa_{k,p}^{gT}$ are related by the same diffeomorphism.

If two such stabilizations are isotopic, the family adjunction inequality gives
\[
2g - 2 \ge 2k + c_1(2p\mathfrak{t})[\iota_k] = 2k + 2p B_1\cdot (A_1 + kB_1) = 2(k+p)\,. 
\]
It follows that $g\ge k+ p +1$.  
We conclude that the embeddings $\kappa_{k,0}^{gT}$ and $\kappa_{k,p}^{gT}$  are subtly knotted for  $g\le k+ p$. This completes the proof of Proposition~\ref{A}.

\section{The main theorem}
In this section we collect and establish a few results that combine with the basic idea outlined in the prior two sections to give the proof of the main theorem. Since the homotopy type of a simply-connected $4$-manifold is determined by its intersection form we begin with a result
about indefinite unimodular quadratic forms. Let $H = \langle A, B \rangle$ denote the standard hyperbolic form with
$A^2 = B^2 = 0$ and $A\cdot B = 1$. In the diagonal form $9(+1)\oplus(-1)$ let $\{H_i\}_{i=1}^9 \cup \{E\}$ be the standard diagonal basis. The form $E_8$ is the positive-definite form obtained as the orthogonal complement to $3E+\sum_{i=1}^9H_i$. This is the only even unimodular form of rank and signature equal to eight. We use $nE_8$ to denote the direct sum of $n$ copies of $E_8$ when $n\ge 0$ and $|n|$ copies of the negative of $E_8$ when $n<0$. 
\begin{lemma}\label{std-form}
Let $Q$ be a unimodular, quadratic form with $r(Q)-|\sigma(Q)| \ge 6$. By the classification any such form contains a hyperbolic summand $\langle A_0, B_0 \rangle$. Given any element $v$ there is an automorphism $\xi_*\in \text{Aut}(Q)$ so that
$\xi_*v$ is orthogonal to  $\langle A_0, B_0 \rangle$. 
\end{lemma}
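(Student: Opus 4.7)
The plan is to apply Wall's transitivity theorem for $\text{Aut}(Q)$ on primitive vectors in indefinite unimodular forms, once a suitable target vector inside $Q':=\langle A_0,B_0\rangle^\perp$ has been produced.

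First I would reduce to the case that $v$ is primitive: the conclusion is immediate for $v=0$, and if $v=dw$ with $w$ primitive, any isometry sending $w$ into $Q'$ also sends $v$ into $Q'$. The hypothesis $r(Q)-|\sigma(Q)|\ge 6$ gives $r(Q')-|\sigma(Q')|\ge 4$, so by the classification $Q'$ is indefinite unimodular of the same parity as $Q$ and contains a hyperbolic summand $\langle A_1,B_1\rangle$. The primitive vector $A_1+kB_1\in Q'$ realizes every even integer $2k$ as its square; when $Q$ (hence $Q'$) is odd, $Q'\cong p(+1)\oplus q(-1)$ with $\min(p,q)\ge 2$ also provides $(\pm 1)$-generators, so primitive vectors of the form $e_1+kf_1$ realize every odd square. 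One then matches the characteristic type of $v$ using $\sigma(Q')=\sigma(Q)$ and the van der Blij congruence $v\cdot v\equiv\sigma(Q)\pmod 8$ for characteristic $v$, producing a primitive $w\in Q'$ of the same square and characteristic type as $v$.

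Wall's theorem \cite{wall:ortho,wall:diffeomorphisms}, which states that $\text{Aut}(Q)$ acts transitively on primitive vectors of a given square and characteristic type in an indefinite unimodular lattice of rank at least three, then yields $\xi_*\in\text{Aut}(Q)$ with $\xi_*v=w\in Q'$. Thus $\xi_* v$ is orthogonal to $\langle A_0,B_0\rangle$, as required. The main subtlety, which I expect to require the most care, is matching the characteristic type in the odd case: one must check that both the characteristic and the ordinary primitive orbits for each compatible square are populated by vectors of $Q'$, which reduces to a direct computation using the decomposition of $Q'$ into a hyperbolic summand and a lower-rank complement.
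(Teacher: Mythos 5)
Your approach is essentially the same as the paper's: reduce to primitive $v$, construct an explicit primitive vector of the same square and characteristic type inside the orthogonal complement of $\langle A_0,B_0\rangle$ (using the van der Blij congruence to pin down the square in the characteristic case), and then invoke Wall's transitivity theorem to move $v$ onto it.

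One concrete slip, though, in the odd ordinary case. With $e_1^2=+1$ and $f_1^2=-1$ the vector $e_1+kf_1$ has square $1-k^2$, which for $k$ even gives only the odd values $1,-3,-15,-35,\dots$, not every odd integer (and for $k$ odd it is even). So ``primitive vectors of the form $e_1+kf_1$ realize every odd square'' is false as written. The paper instead uses $\epsilon E_1+A_1+kB_1$, where $\langle A_1,B_1\rangle$ is a hyperbolic summand of $Q'$ and $E_1^2=-1$; this has square $2k-\epsilon$ and so realizes every integer as $\epsilon$ ranges over $\{0,1\}$. Since your $Q'$ already contains both a hyperbolic summand and a diagonal $(\pm1)$-generator (you correctly observe $\min(p,q)\ge 2$ after splitting off $\langle A_0,B_0\rangle$), the fix is immediate: use $E_1+A_1+kB_1$ for odd squares rather than $e_1+kf_1$. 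You defer the explicit characteristic representative to ``a direct computation''; the paper's choice is $2A_1+2kB_1+\sum_i H_i+\sum_j E_j$, of square $\sigma+8k$, and is worth writing down since checking primitivity and the characteristic condition is the one place where the computation actually has content.
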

\begin{proof}
This follows from the classification of indefinite unimodular forms, \cite{MH}, combined with a theorem of Wall that establishes that
any two primitive, classes of the same square and type in a unimodular, quadratic form with $r(Q)-|\sigma(Q)| \ge 6$ are related by an automorphism, \cite{wall:ortho}. The result is clear for the zero class. Furthermore, it suffices to prove the result for primitive classes. Indeed if $v = dw$ and $\xi_*w$ is orthogonal to $\langle A_0, B_0 \rangle$, then $v$ is. Now consider the case when $Q$ is an even form.
By the classification this implies that $Q = nE_8\oplus mH$.  By assumption, we have at least two hyperbolic summands. Label the summands $\langle A_p, B_p \rangle$ for $p = 0, \dots, m-1$. Since any primitive element $v$ has a dual element $v^*$ so that  $v^*\cdot v = 1$, no primitive element may be characteristic. Any element has even square, and $A_1+kB_1$ are ordinary, primitive classes with square $2k$, so the desired automorphism exists by the result of Wall.  

Now let $Q$ be odd. 
By the classification of odd, indefinite, unimodular forms, we have $Q = p (1) \oplus q (-1)$. By the rank-signature inequality, $p, q \ge 3$ so one may write $Q = (p-2) (1) \oplus (q-2) (-1) \oplus 2H$. Let $\{H_i\}_{i=1}^{p-2} \cup \{E_i\}_{j=1}^{q-2}\cup \{A_0,B_0, A_1, B_1\}$ be a corresponding basis. Now any possible square is represented in the form
$(\epsilon E_1+A_1+kB_1)^2 = 2k - \epsilon$ for $\epsilon\in\{0,1\}$, and the classes $\epsilon E_1+A_1+kB_1$ are primitive and ordinary.

Finally, let $v$ be primitive and characteristic. This implies that $v^2 = \sigma + 8k$ for some integer $k$ \cite{MH}. 
Now $2A_1 + 2kB_1 + \sum_{i=1}^{p-2} H_i + \sum_{j=1}^{q-2} E_j$ is a characteristic element of square $\sigma +8k$, so the desired automorphism exists by the result of Wall.
\end{proof}

\begin{remark*}
Notice that the proof of this lemma gives us a standard form (up to automorphisms) for elements in unimodular quadratic forms with 
$r(Q)-|\sigma(Q)| \ge 6$. Namely, any non-zero class has a representative up to automorphism of one of the forms listed in Table~\ref{t:v-forms}. In this table we write an even unimodular form as $nE_8\oplus mH$ and let $\langle A_1, B_1 \rangle$ be a basis for one of the hyperbolic summands. We write an odd unimodular form as  $Q = (p-2) (1) \oplus (q-2) (-1) \oplus 2H$  with basis $\{H_i\}_{i=1}^{p-2} \cup \{E_i\}_{j=1}^{q-2}\cup \{A_0,B_0, A_1, B_1\}$.
\begin{table}[ht]
\renewcommand{\arraystretch}{1.5}
\centering
\begin{tabular}{llc}
{\bf Form}  & {\bf Element type} & {\bf Standard representative}\\
\hline
even  & ordinary  & $(2d+1)A_1 + (2d+1)kB_1$ \\
even  & characteristic & $2dA_1 + 2dkB_1$ \\
odd  & ordinary  & $d\epsilon E_1+dA_1 + dkB_1$ \\
odd  & characteristic  & $(2d+1)\left(2A_1 + 2kB_1 + \sum_{i=1}^{p-2} H_i + \sum_{j=1}^{q-2} E_j\right)$  \\
\end{tabular}
\\[2ex]
\caption{{Standard forms}}\label{t:v-forms}
\end{table}
\renewcommand{\arraystretch}{1}

\noindent
A second theorem of Wall \cite{wall:diffeomorphisms} asserts that the group of orientation preserving diffeomorphisms on an indefinite manifold that has been stabilized generates all automorphisms of the intersection form. 
\end{remark*}

Most smoothly knotted surfaces in smooth $4$-manifolds arise from exotic structures on closely related smooth $4$-manifolds. The study of the existence and non-uniqueness of smooth structures is a vast field with many results. There are many approaches to generate a collection of smooth $4$-manifolds suitable for the proof of our main theorem. We chose one approach that generates a collection of symplectic manifolds.
It is convenient to have a large collection of symplectic $4$-manifolds, each of which includes a self-intersection zero symplectic torus in an $N(2)$ nucleus. Such a collection was constructed in \cite{Park06}. We restate a special case of the main result of that paper here using wording optimized for for our application.

\begin{theorem}[Park]\label{geog} 
There is a constant $r_*$ so that any odd, unimodular form with rank $r(Q) > r_*$ and signature satisfying 
$|\sigma(Q)| \le \frac{3}{13}r(Q)$ and odd-dimensional maximal positive-definite subspace is represented by the intersection form of a simply-connected, closed, symplectic, 
$4$-manifold containing a self-intersection zero symplectic torus in an $N(2)$ nucleus. If, in addition, $\sigma(Q) \equiv 0 \ (\text{mod} \ 16)$, any even, unimodular  form with rank $r(Q) > r_*$ and signature satisfying 
$|\sigma(Q)| \le \frac{3}{13}r(Q)$ and odd-dimensional maximal positive-definite subspace is represented by  by the intersection form of a simply-connected, closed, symplectic, 
$4$-manifold containing  a self-intersection zero symplectic torus in an $N(2)$ nucleus.
\end{theorem}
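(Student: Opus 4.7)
The plan is to extract this from Park's geography construction \cite{Park06} by keeping track of where the surgical operations take place. The statement is essentially a repackaging of Park's main theorem once one verifies that a single $N(2)$ nucleus can always be preserved, so the new content is the disjointness bookkeeping rather than the geography itself.

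First, I would translate the hypotheses on $Q$ into geographic data. By the classification of indefinite unimodular forms, an odd $Q$ is determined by $(b_+,b_-)$ and an even $Q$ by $(n,m)$ with $Q=nE_8\oplus mH$, and in either case the rank, signature, and parity determine the Euler characteristic $\chi=2+r(Q)$ and signature $\sigma(Q)$ that a simply-connected realization must carry. The hypothesis $r(Q)-|\sigma(Q)|\ge 6$, the bound $|\sigma(Q)|\le \tfrac{3}{13}r(Q)$, the mod-$16$ condition in the even (spin) case, and the odd-dimensionality of the maximal positive-definite subspace are precisely what pin $(c_1^2,\chi_h)$ into the region covered by Park's construction. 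By Freedman's classification, once a closed simply-connected smooth $4$-manifold with the correct $(\chi,\sigma)$ and parity is produced, its intersection form is automatically isomorphic to $Q$.

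Next I would invoke Park's main theorem to produce, for each such $Q$ with $r(Q)$ larger than some threshold $r_*$, a simply-connected closed symplectic $4$-manifold $X$ with the correct Euler characteristic, signature, and parity. Park's $X$ is assembled from minimal symplectic building blocks by iterated Gompf symplectic normal sums along self-intersection zero tori; in the spin case one uses spin blocks (like $E(2n)$) so that the gluing preserves $w_2=0$. The crucial step for this paper is to fix one building block to be $E(2)$ and designate one of its three disjoint $N(2)$ nuclei, say $N_3$, as the \emph{preserved} nucleus. All symplectic fiber sums and any auxiliary knot surgeries, logarithmic transforms, or other local modifications in Park's recipe can be performed along tori lying in $N_1$, $N_2$, or in the complementary blocks, and thus inside the open set $E(2)\setminus N_3$. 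The resulting $X$ then contains $N_3$ untouched, and its regular torus fiber is a self-intersection zero symplectic torus, exactly as required.

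The main obstacle is verifying the disjointness claim uniformly across Park's construction: one must go through the finitely many families of building blocks and surgery recipes used to tile the allowed region of the $(c_1^2,\chi_h)$-plane and check that each recipe can be realized with all of its surgery loci disjoint from a single chosen $N(2)$ nucleus. This is straightforward for fiber sums (Gompf already proves simple connectivity of the sum from simple connectivity of the complements of the glued tori, so one has plenty of room to move tori into a different nucleus), but needs attention for constructions that rely on knot surgery or rational blow-downs inside $E(2)$; in those cases one simply rearranges the operation to act in $N_1$ or $N_2$. The threshold $r_*$ absorbs the finitely many small-rank lattice points near the boundary of the region, where Park's construction has exceptional cases that would require ad hoc verification.
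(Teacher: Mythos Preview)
Your overall plan is sound and would in principle succeed, but the paper takes a shorter route that avoids the bookkeeping you flag as the ``main obstacle.'' Rather than tracing a preserved $N(2)$ nucleus through Park's building blocks and surgery recipes case by case, the paper exploits the gap between Park's actual slope bound ($m>8.76$) and the weaker bound $m=8.75$ used in the theorem statement. That slack lets one first apply Park's theorem to produce a spin symplectic manifold with signature $16$ less than the target, and then symplectic fiber sum with a single copy of $E(2)$ along a torus in one of its nuclei; the other two nuclei of that $E(2)$ survive untouched, so the resulting manifold automatically contains an $N(2)$ nucleus with a self-intersection zero symplectic torus. This sidesteps entirely the need to inspect Park's internal architecture.

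Your treatment of the odd case is also vaguer than it needs to be. The paper handles it in one line: once the even (spin) case is done, obtain the odd forms by repeated blow-up of the spin manifolds, which preserves the nucleus and stays symplectic. You hint at separate odd building blocks, which would again require revisiting Park's construction. Both issues are fixable, but the paper's two tricks---reserve signature room for a final $E(2)$ fiber sum, and blow up for the odd case---make the proof essentially a paragraph rather than a case analysis.
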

\begin{proof}
Park's theorem does not consider the odd case, or establish that the relevant manifolds 
contain copies of $N(2)$. However we will see that these are both minor modifications to his result. His result covers the spin case and is stated using the square of the first Chern class $c$, and the holomorphic Euler characteristic $\chi$. These are related to the rank and signature by 
\[
c = 3\sigma + 2r +4, \ \text{and} \ \chi = \frac14(\sigma + r + 2).
\]
Park's theorem states that all but a finite number of allowable pairs $(c,\chi)$ satisfying $0 \le c \le m\chi$ are represented by an infinite number of smoothly distinct, simply-connected, spin, symplectic $4$-manifolds. Here $m$ is some constant larger than $8.76$. The upper bound on $\sigma$ in the theorem statement that we give is obtained by using $m = 8.75$. The lower bound given here is more restrictive than Park's result. We make this restriction in order to insure that the region is symmetric under
change of sign of the intersection form. The manifolds with odd intersection form may be obtained by repeated blow-up.
Park establishes the infinite collection of differential structures on these representatives via knot surgery on a self-intersection zero symplectic torus. 

The difference between $8.76$ and $8.75$ is large enough that we may first take a manifold with signature $16$ less than the desired manifold and then submanifold sum with a copy of $E(2)$ along a torus in one of the three nuclei without modifying the other nuclei.  This insures that the resulting manifold contains a Gompf-nucleus.
\end{proof}

A smooth $4$-manifold \emph{dissolves} if it has the form $p\cptwo\cs q\cptwobar$ or $nE(2) \cs m(\sss)$. Here we allow negative values of $n$ with the interpretation that a negative value refers to a sum of $|n|$ copies taken with the non-algebraic orientation. 
It follows directly from the classification of topological, simply-connected $4$-manifolds that every manifold in Theorem~\ref{geog} is homeomorphic to a manifold that dissolves. Wall's 
theorem
implies that  after taking the connected sum with enough copies of $\sss$ any smooth $4$-manifold will  dissolve \cite{wall:4-manifolds}.
Call a manifold \emph{almost completely decomposable} if the connected sum of the manifold with $\sss$ dissolves.

\begin{definition}
Call the collection of manifolds described in Theorem~\ref{geog} the \emph{Park manifolds}. 
\end{definition}

The use of the symplectic structure is that it provides an easy to state criteria for non-vanishing Seiberg-Witten invariants. Namely, according to Taubes, \cite{T-SW}, the Seiberg-Witten invariant of the canonical class on a symplectic manifold with $b^2_+>1$ satisfies $|\text{SW}(K)| = 1$. The symplectic torus of self-intersection zero in $N(2)$ is useful for submanifold sums, knot surgery, and  log transforms. The fact that the torus is in $N(2)$ allows one to easily conclude that log transform does not change the topological type of the manifold. 

\begin{lemma}\label{ZX}
Given any stably smoothable simply-connected $4$-manifold $X$, there is an integer $N$ and  a pair of symplectic, almost completely decomposable $4$-manifolds $Z_X'$ and $Z_{-X}'$ each containing a nucleus, so that $X\cs N(\sss)$ is homeomorphic to $Z_X = Z_X'\cs(\sss)$ which is orientation-reversing diffeomorphic to $Z_{-X} = Z_{-X}'\cs(\sss)$. If $X$ is smooth the homeomorphism may be taken to be a diffeomorphism.
\end{lemma}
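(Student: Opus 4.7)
The plan is to start from the intersection form $Q_X$ and apply Park's theorem (Theorem~\ref{geog}) to produce symplectic representatives with $N-1$ extra hyperbolic summands, so that a further $\sss$-sum realizes the intersection form of $X\cs N(\sss)$. Specifically, I would aim for symplectic $Z_X'$ with intersection form $Q_X \oplus (N-1)H$ and $Z_{-X}'$ with intersection form $-Q_X \oplus (N-1)H$, each containing a Gompf $N(2)$-nucleus. Then $Z_X = Z_X' \cs (\sss)$ has intersection form $Q_X \oplus NH$, agreeing with that of $X\cs N(\sss)$, and similarly $Z_{-X}$ matches $-X \cs N(\sss)$.

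First I would verify that Park's hypotheses hold for $\pm Q_X \oplus (N-1)H$ once $N$ is large. The signature is fixed at $\pm\sigma(Q_X)$ while the rank is $r(Q_X) + 2(N-1)$, so the rank bound $r > r_*$ and the inequality $|\sigma| \le (3/13)r$ are satisfied for $N \gg 0$. The parity condition that the maximal positive-definite subspace be odd-dimensional is arranged by adjusting $N$ by at most one (and, in the case where $r(Q_X)$ is odd so that $b^+(Q_X)$ and $b^-(Q_X)$ have opposite parity, by first replacing $X$ with $X \cs \cptwo \cs \cptwobar$, which makes the rank even and can be reabsorbed into the final stabilization count). When $Q_X$ is even, the divisibility $\sigma(Q_X) \equiv 0 \pmod{16}$ needed for Park is a consequence of Rohlin's theorem: since $X$ is stably smoothable and $\sss$-connected sum preserves spinness, some $X \cs N_0(\sss)$ is a smooth spin manifold with the same signature $\sigma(Q_X)$. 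Park's theorem then supplies $Z_X'$ and $Z_{-X}'$ as simply-connected closed symplectic $4$-manifolds with the prescribed intersection forms and self-intersection-zero symplectic tori inside Gompf $N(2)$-nuclei.

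Next I would argue that $Z_X = Z_X' \cs (\sss)$ is almost completely decomposable and is homeomorphic to $X \cs N(\sss)$ (diffeomorphic when $X$ is smooth). The homeomorphism follows from Freedman's classification since both manifolds are simply-connected with the same unimodular intersection form and both are smoothable (so Kirby--Siebenmann vanishes). If $X$ is already smooth, Wall's stable-diffeomorphism theorem \cite{wall:4-manifolds} upgrades this to a diffeomorphism after absorbing enough $\sss$-summands into $N$. Almost-complete decomposability of $Z_X'$ is a consequence of the Mandelbaum--Moishezon--Gompf dissolving machinery already cited in the paper: Park's construction assembles the required symplectic manifolds from elliptic surfaces via submanifold sum along tori, knot surgery, and blow-ups, all of which are known to produce almost completely decomposable manifolds; hence $Z_X' \cs (\sss)$ dissolves to a standard connected sum of $\cptwo$, $\cptwobar$, and $E(2)$ summands, and analogously for $Z_{-X}'$.

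Finally, for the orientation-reversing diffeomorphism $Z_X \cong -Z_{-X}$, since both manifolds dissolve to standard forms of type $p\cptwo \cs q\cptwobar$ or $nE(2) \cs m(\sss)$ with parameters determined by $Q_X$, reversing orientation on the dissolved form of $Z_X$ swaps $\cptwo \leftrightarrow \cptwobar$ (or negates $n$), producing exactly the dissolved form of $Z_{-X}$. The main obstacle I anticipate is the simultaneous satisfaction of Park's parity and divisibility conditions for both $\pm Q_X$ under a single $N$; handling the parity of $r(Q_X)$ and the Rohlin constraint with careful hyperbolic/trivial-summand bookkeeping is the technical heart of the argument.
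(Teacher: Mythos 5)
Your overall strategy is close to the paper's --- produce symplectic Park manifolds with the right intersection forms, stabilize once, and identify via Freedman --- but there is a genuine gap in your treatment of almost complete decomposability. You assert that the manifolds from Theorem~\ref{geog} are automatically ACD because Park builds them from elliptic surfaces by submanifold sum, knot surgery, and blow-up, ``all of which are known to produce almost completely decomposable manifolds.'' Theorem~\ref{geog} makes no such claim, and the Mandelbaum--Moishezon--Gompf dissolving results the paper cites apply to specific fiber sums along cusp tori (as in $N(2)\cs_T N(2)\cs(\sss)\cong N(2)\cs N(2)\cs 2(\sss)$); they do not cover an arbitrary symplectic sum, and Park's geography construction can involve sums along higher-genus surfaces where no dissolving statement is available. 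The paper deliberately avoids assuming a Park manifold is ACD. Instead it takes any Park manifolds $Z''_{\pm X}$ homeomorphic to $\pm X\cs n_0(\sss)$, invokes Wall's theorem \cite{wall:4-manifolds} to choose $n_1$ so that $Z''_{\pm X}\cs n_1(\sss)$ and $P_0\cs n_1(\sss)$ all dissolve, and then sets $Z'_{\pm X}=Z''_{\pm X}\cs_T P_0^{n_1\cs_T}$, the fiber sum with $n_1$ copies of a fixed spin, signature-zero Park manifold $P_0$ along nucleus tori. A single $\sss$-stabilization then ``unzips'' all $n_1$ fiber sums, giving $Z'_{\pm X}\cs(\sss)\cong Z''_{\pm X}\cs P_0^{n_1\cs}\cs(n_1+1)(\sss)$, which dissolves by the choice of $n_1$. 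This bootstrap --- trading $\sss$-stabilizations for fiber sums with $P_0$ so the result stays a symplectic, nucleus-containing Park manifold while still dissolving after one stabilization --- is the missing idea; without it there is no way to certify that your $Z'_X$ is ACD as the lemma requires.

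Two smaller points worth tightening. Your $\cptwo\cs\cptwobar$ maneuver for odd $r(Q_X)$ only reabsorbs because, for odd $Q_X$, one has $Q_X\oplus\langle 1\rangle\oplus\langle -1\rangle\cong Q_X\oplus H$, so $X\cs\cptwo\cs\cptwobar$ is homeomorphic to $X\cs(\sss)$; for even $Q_X$ this would change the type of the form, but even unimodular forms always have even rank so that case never occurs --- say so explicitly. And you must verify that a single $N$ satisfies Park's hypotheses simultaneously for $Q_X\oplus(N-1)H$ and $-Q_X\oplus(N-1)H$; the paper's restriction $|\sigma|\le\tfrac{3}{13}r$ is chosen precisely to be symmetric under $Q\mapsto -Q$, and the parity of $b^+$ is the same for $\pm Q$ once the rank is even, so the simultaneity does hold, but it needs to be stated.
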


\begin{proof}
Let $X$ be stably smoothable. By Freedman's homeomorphism classification of simply-connected $4$-manifolds,  there is an $n_0$ so that $X\cs n_0(\sss)$ is  homeomorphic to a Park manifold $Z''_X$ and $-X\cs n_0(\sss)$ is homeomorphic to a Park manifold $Z''_{-X}$. We need a pair of such manifolds that are almost decomposable. Since the family of Park manifolds is closed under submanifold sum along the distinguished tori, this can be arranged.  Indeed, by Theorem~\ref{geog} there is a simply-connected, spin, signature zero, symplectic $4$-manifold $P_0$, containing a self-intersection zero symplectic torus in an $N(2)$ nucleus. By \cite{wall:4-manifolds}, there is an $n_1>1$ so that $P_0\cs n_1(\sss)$, $Z_X''\cs n_1(\sss)$, and $Z_{-X}''\cs n_1(\sss)$ all dissolve. This implies that $P_0\cs n_1(\sss) \cong (n_1+b^2_+(P_0))(\sss)$. By \cite{mandelbaum:irrational,mandelbaum,mandelbaum,gompf:elliptic}, $N(2)\cs_T N(2) \cs (\sss) \cong N(2)\cs N(2) \cs 2(\sss)$. It follows that $Z_X' = Z_X'' \cs_T P_0^{n_1\cs_T}$ and $Z_{-X}' = Z_{-X}''\cs_T P_0^{n_1\cs_T}$ are almost completely decomposable,  Park manifolds.  The manifold from the theorem will be $Z_X = Z_X'\cs (\sss)$ and this is orientation reversing diffeomorphic to 
$Z_{-X} = Z_{-X}'\cs (\sss)$.
The number $N$ from the theorem statement will just be $n_1b^2_+(P_0) + n_1 + n_0+1$ because
\begin{align*}
Z_X &= Z_X'\cs (\sss) \\
&= Z_X'' \cs_T P_0^{n_1\cs_T}\cs (\sss) \\
&= Z_X'' \cs P_0^{n_1\cs}\cs (n_1+1)(\sss) \\
&= Z_X'' \cs  (n_1b^2_+(P_0) +n_1+1)(\sss)  = X\cs N(\sss).
\end{align*} 
The analogous computation shows $Z_{-X} = -X\cs N(\sss)$. If $X$ is already smooth we can pick $n_0$ so that $X\cs n_0(\sss)$ is diffeomorphic to $Z''_X\cs(\sss)$ for some Park manifold $Z''_X$, and $-X\cs n_0(\sss)$ is diffeomorphic to $Z''_{-X}\cs(\sss)$ for some Park manifold $Z''_{-X}$.
\end{proof}

\begin{proof}[Proof of Theorem~\ref{TA}]
Let $Z_{\pm X}$ be the manifolds given by Lemma~\ref{ZX}, and
let $j_0:\Sigma\to Z_X$ be any homologically essential embedding with non-negative self-intersection. The case of negative self-intersection will be addressed at the end of the proof. From here the proof will divide into cases depending if $j_0(\Sigma)$ is characteristic or ordinary and the type of the intersection form of $X$. First consider the case when $X$ is Spin and $j_0(\Sigma)$ is characteristic.

As in the proof of the special case of $E(2)\cs(\sss)$ let $\sigma$ and $T$ be the section and fiber in the nucleus in $Z_X'$ and let $A_1$ denote the homology class of $\sigma+T$ and $B_1$ denote the homology class of $T$ in $Z_X'$. Let $A_0$ denote the homology class of $S^2\times \text{pt}$ and $B_0$ denote the homology class of $\text{pt}\times S^2$ in the $\sss$ summand of $Z_X = Z_X'\cs (\sss)$. From the proof of Lemma~\ref{std-form} and Table~\ref{t:v-forms} there is an automorphism $\zeta_*$ of the intersection form so that  $\zeta_*[j_0(\sigma)] = 2dA_1 + 2dkB_1$ with $k\ge 0$ and $d>0$.  Furthermore, by a theorem of Wall, there is a diffeomorphism $\zeta$ realizing $\zeta_*$ at the level of intersection forms. As diffeomorphisms take  pairs of subtly knotted surfaces to pairs of subtly knotted surfaces, we can assume without loss of generality that $\zeta = 1$.

Let $Z_{X;2p+1}'$ denote the result of performing a $(2p+1)$-log transform in the distinguished nucleus of $Z_X'$. The homeomorphism relating the nucleus and the log-transformed nucleus extends via the identity to a homeomorphism $\psi_p: Z_{X;2p+1}' \to Z_{X}'$. Similarly the stable diffeomorphism relating the sum of the log-transformed nucleus and $\sss$ with the sum of the nucleus and $\sss$ extends via the identity to a diffeomorphsim
\[
\varphi_p: Z_{X;2p+1}' \cs(\sss) \to Z_{X}'\cs (\sss).
\]
As in the proof of Proposition~\ref{A} define
\[
\alpha_p:Z_X\to Z_X, \quad \alpha_p = [\varphi_p, R_{\varsigma_1}R_{\varsigma_{-1}}],.
\]
Here $\varsigma_\pm$ are the maps with image in the $\sss$ factor of $Z_X = Z_X'\cs(\sss)$. The same argument shows that
$\alpha_p\cs 1_{\sss}$ is isotopic to the identity, hence $\alpha_p$ is pseudoisotopic and topologically isotopic to the identity. Set $j_p = \alpha_p\circ j_0$.  

Now turn to distinguishing $j_0^{gT}$ and $j_p^{gT}$. Since $Z_X'$ is symplectic, it has a canonical class $K$ with $SW^{\Z_2}(Z_X',K) = 1$. Since the classes $A_1$ and $B_1$ are each represented by a torus in $Z_X'$, the adjunction inequality gives
\[
|K\cdot A_1| = A_1\cdot A_1 + |K\cdot A_1| \le 0, \ \text{and} \ |K\cdot B_1| = B_1\cdot B_1 + |K\cdot B_1| \le 0.
\]
The adjunction inequality also implies that $SW^{\Z_2}(Z_X',K+2\ell\mathfrak{t}) = 0$ for $\ell \neq 0$.
By \cite[Cor 1.4]{MMS}, $SW^{\Z_2}(Z_{X;2p+1}' ,K+2\ell\mathfrak{t}) = 1$ precisely when $|\ell| \le p$ where  $2\mathfrak{t}$ represents both the cohomology class Poincar\'e dual to $2[T_1] = 2[F_{2p+1}]$ and the corresponding 
$\text{Spin}^c$ structure.

Now,
\[
SW_{Z_X'}^{\pi_0,\Z_2}(\alpha_p,K+2\ell\mathfrak{t}) = SW^{\pi_0,\Z_2}_{Z_X'}( R_{\varphi_p\varsigma_1}R_{\varphi_p\varsigma_{-1}},K+2\ell\mathfrak{t}) + SW_{Z_X'}^{\pi_0,\Z_2}( R_{\varsigma_1}R_{\varsigma_{-1}},K+2\ell\mathfrak{t}),
\]
and
\[
SW^{\pi_0,\Z_2}_{Z_X'}( R_{\varphi_p\varsigma_1}R_{\varphi_p\varsigma_{-1}},K+2\ell\mathfrak{t}) = 
SW^{\pi_0,\Z_2}_{Z_{X,2p+1}'}( R_{\varsigma_1}R_{\varsigma_{-1}},K+2\ell\mathfrak{t}).
\]

Using the main theorem of \cite{BK} gives 
\[
SW_Z^{\pi_0,\Z_2}( R_{\varsigma_1}R_{\varsigma_{-1}},\mathfrak{s}) = FSW^{\Z_2}( S^1\times_{R_{\varsigma_1}R_{\varsigma_{-1}}}Z_X,\mathfrak{s}) = SW^{\Z_2}(Z_X',\mathfrak{s})\,, \ \text{and}
\]
\[
SW_{E(2;2p+1)\cs\sss}^{\pi_0,\Z_2}( R_{\varsigma_1}R_{\varsigma_{-1}},\mathfrak{s}) = FSW^{\Z_2}( S^1\times_{R_{\varsigma_1}R_{\varsigma_{-1}}}Z_{X,2p+1},\mathfrak{s}) = SW^{\Z_2}(Z_{X,2p+1}',\mathfrak{s})\,, 
\]
where $\mathfrak{s}$ restricts to the standard Spin structure on $\sss$.
Thus,
$SW^{\pi_0,\Z_2}(\alpha_p,2\ell\mathfrak{t}) = 1$ for $\ell$ with 
$0 < |\ell| \le p$. In particular, $c_1(2p\mathfrak{t})$ is a $(\Z_2,\alpha_p)$-basic class. By the $1$-parameter adjunction inequality of Theorem~\ref{bar}, the assumption that $j_0^{gT}$ and $j_p^{gT}$ are isotopic gives
\begin{align*}
2(g+g(\Sigma)) - 2 &\ge [j_0(\Sigma)]\cdot [j_0(\Sigma)] +  |[j_0(\Sigma)]\cdot [K+2p B_1]| \\
&\ge  (2dA_1 + 2dkB_1)\cdot (2dA_1 + 2dkB_1) + |(2dA_1 + 2dkB_1)\cdot (K+2p B_1)| \\
 &= 8d^2k + 4dp. 
\end{align*}
It follows that $j_0^{gT}$ and $j_p^{gT}$ are subtly knotted when $4dp > 2(g+g(\Sigma)) - 2 - 8d^2k$.

If $X$ is Spin, $j_0(\Sigma)$ has non-negative self-intersection, and is ordinary, 
 the homology class may be taken to be $[j_0(\Sigma)] =  (2d+1)A_1 + (2d+1)kB_1$ with $d, k, \ge 0$. This gives
\begin{align*}
2(g+g(\Sigma)) - 2 &\ge  ((2d+1)A_1 + (2d+1)kB_1)\cdot ((2d+1)A_1 + (2d+1)kB_1) \\
&\qquad + |((2d+1)A_1 + (2d+1)kB_1)\cdot (K+2p B_1)| \\
 &= 8d^2k + 8dk + 2k + (4d+2)p, 
\end{align*}
so that $j_0^{gT}$ and $j_p^{gT}$ are subtly knotted when $(4d+2)p > 2(g+g(\Sigma)) - 2 - 8d^2k - 8dk - 2k$.

If $X$ is not Spin, $j_0(\Sigma)$ has non-negative self-intersection, and is ordinary, 
 the homology class may be taken to be $[j_0(\Sigma)] =  dA_1 + dkB_1+d\epsilon E_1$ with $d > 0$ and $0\le\epsilon \le k$. This gives
\begin{align*}
2(g+g(\Sigma)) - 2 &\ge  (dA_1 + dkB_1+d\epsilon E_1)\cdot (dA_1 + dkB_1+d\epsilon E_1) \\
&\qquad + |(dA_1 + dkB_1+d\epsilon E_1)\cdot (K+2p B_1)| \\
 &\ge 2d^2k - d^2\epsilon +|d\epsilon K\cdot E_1 + 2dp| 
\ge 2d^2k - d^2\epsilon - d\epsilon |K\cdot E_1| + 2dp, 
\end{align*}
so that $j_0^{gT}$ and $j_p^{gT}$ are subtly knotted when $2dp > 2(g+g(\Sigma)) - 2 - 2d^2k + d^2\epsilon + d\epsilon |K\cdot E_1|$.

If $X$ is not Spin, $j_0(\Sigma)$ has non-negative self-intersection, and is characteristic, 
 the homology class may be taken to be $[j_0(\Sigma)] =  (2d+1)\left(2A_1 + 2kB_1 + \sum_{i=1}^{r-2} H_i + \sum_{j=1}^{s-2} E_j\right)$ with $d \ge 0$ and $k \ge (s-r)/8$. This gives
\begin{align*}
2(g+g(\Sigma)) - 2 &\ge  \left((2d+1)\left(2A_1 + 2kB_1 + \sum_{i=1}^{r-2} H_i + \sum_{j=1}^{s-2} E_j\right)\right)^2
\\
&\qquad + |(2d+1)\left(2A_1 + 2kB_1 + \sum_{i=1}^{r-2} H_i + \sum_{j=1}^{s-2} E_j\right)\cdot (K+2p B_1)| \\
 &\ge (2d+1)^2(8k +r -s) +|(2d+1)\left(\sum_{i=1}^{r-2} H_i + \sum_{j=1}^{s-2} E_j\right) \cdot K  + (8d+4)p| \\
 &\ge (2d+1)^2(8k +r -s) - (2d+1)|\left(\sum_{i=1}^{r-2} H_i + \sum_{j=1}^{s-2} E_j\right) \cdot K|  + (8d+4)p, 
\end{align*}
so that $j_0^{gT}$ and $j_p^{gT}$ are subtly knotted when 
\[
(8d+4)p > 2(g+g(\Sigma)) - (2d+1)^2(8k +r -s) + (2d+1)|\left(\sum_{i=1}^{r-2} H_i + \sum_{j=1}^{s-2} E_j\right) \cdot K|.
\]

Notice that this also shows that there is an infinite collection of subtly knotted surfaces. Since any pair of topologically isotopic surfaces become isotopic after some number of internal stabilizations there is a sequence $p_k$ so that the internal stabilization distance $\delta(j_0,j_{p_k})$ is strictly increasing. Since this distance only depends upon the isotopy type of $j_{p_k}$ we conclude that these are all smoothly distinct. 

The only remaining item to check is the case when $j_0(\Sigma)$ has negative square. Here one can take an orientation reversing diffeomorphism $f:Z_X \to Z_{-X}$ and notice that $\kappa_0(\Sigma) = f\circ j_0(\Sigma)$ has non-negative square. Thus there is an infinite family  of subtly knotted maps $\kappa_p$ so that $\delta(\kappa_0,\kappa_p)$ tends to infinity. Since the properties of subtle knotting and internal stabilization distance are preserved by diffeomorphisms, the family $j_p = f^{-1}\circ\kappa_p$ satisfies the conclusion of the theorem. 
\end{proof}

\section{Extensions}
It would not be unreasonable to conjecture that the main theorem holds without restriction on the (finitely-presented)  fundamental group for any $4$-manifold. To follow the proof given in the simply-connected case, one would need to know the analogue of Lemma~\ref{ZX}. Knowing that any stably smoothable $4$-manifold was stably symplectic would suffice. In addition, one would need to classify elements of the second homology of a suitably stabilized copy of the manifold up to orientation-preserving diffeomorphism.

\begin{proof}[Proof of Corollary~\ref{anypi1}]
According to \cite[Theorem 4.1]{gompf:symp}, any finitely-presented group is the fundamental group of a closed, symplectic $4$-manifold, $X$. Looking at the proof one sees that these manifolds are obtained via submanifold sum starting with a product $\Sigma_g\times T^2$ and many copies of $E(2)$. In particular, these manifolds each contain a nucleus. As we have been doing let $B_1$ denote the class of the torus, and $A_1$ denote the class of $T+\sigma$. According to \cite{T-SW} the canonical class of this manifold will be a $\Z_2$ basic class, and according to the adjunction inequality we will have $A_1\cdot K = B_1\cdot K =0$. Let $X_{2p+1}$ be the result of a $(2p+1)$-log transform on $X$. The formula of \cite{MMS} still implies that $K+2\ell\mathfrak{t}$ are $\Z_2$ basic classes of $X_{2p+1}$. Let $Z = X\cs (\sss)$ and let $j_0:\Sigma \hookrightarrow$ be any embedding with $[j_0(\sigma)]\cdot B_1 \neq 0$ with image in $(E(2)\setminus N(2))\cs(\sss)$. Proceeding in as in the proof of the main theorem one constructs the homeomorphsim $\psi_p$ and stable diffeomorphism $\alpha_p$. Notice that the diffeomorphism $\alpha_p$ has support in $(E(2)\setminus N(2))\cs(\sss)$, so the main theorem of \cite{SKRAM} still applies and one sees that $\alpha_p$ becomes trivial after one external stabilization. The computation of the family invariant and application of the $1$-parameter adjunction inequality apply as in the proof of the main theorem.  There is one more difference between the simply-connected case and the non-simply-connected case. The result of Baykur and Sunukjian establishes that topologically isotopic embeddings become smoothly isotopic after sufficiently many $1$-handle stabilizations. When the fundamental group of the boundary of a tubular neighborhood surjects to the fundamental group of the exterior of the surface, this implies that the surfaces also become smoothly isotopic after sufficiently many internal stabilizations. This is the case for surfaces in simply-connected manifolds. It will also be the case here as we arranged the embeddings, and diffeomorphisms to all take place in a simply-connected portion of the ambient manifold.
\end{proof}

\begin{proof}[Proof of  Corollary~\ref{anyknot}] The diffeomorphisms $\alpha_p$ used to construct the embeddings in the proof of Theorem~\ref{TA} all restrict to the identity on a distinguished disk. Let $Z\setminus B^4$ be the complement of  the interior of this disk, and let $(j_0,K):(\Sigma,\partial\Sigma)\hookrightarrow (Z\setminus B^4,S^3)$ be any proper embedding as in the statement of the corollary. Capping off the surface with a Seifert surface and $Z$ with the disk produces an embedding of a closed surface  in $Z$ as in the main theorem. Since the diffeomorphisms $\alpha_p$ are all trivial in a neighborhood of $B^4$, this ball and the Seifert surface may be removed to obtain the required surfaces with boundary. If two of the surfaces with boundary were isotopic, it would imply that the closed surfaces were isotopic, so that they would correspond to the same $p$-value.  
\end{proof}

The results from Corollaries~\ref{anyknot} and~\ref{anypi1} may be combined. Given any symplectic manifold containing a nucleus one may take log-transforms and create interesting diffeomorphisms in the once stabilized manifold. These diffeomorphims will have support in $N(2)\cs(\sss)$. Thus one can look for any embedded surface with non-trivial intersection number with the torus in the nucleus. One can then find a separating $3$-manifold $Y$ in the complement of $N(2)\cs(\sss)$ and excise the component of the exterior of $Y$ that is disjoint from $N(2)\cs(\sss)$ from the ambient manifold and excise the 
corresponding part of the surface. The result will be a properly embedded surface in a $4$-manifold with boundary and interesting fundamental group. The basic argument still applies in this situation. While this is fairly general, it is not completely general. Indeed, it is known that there are $3$-manifolds that do not embed into any symplectic $4$-manifold, \cite{DLM}.

\begin{proof}[Proof of Corollary~\ref{cd1}] It is known that any $3$-manifold $Y$ is the boundary of a Spin $2$-handlebody $W$, \cite{kaplan}. The homology sequence of the pair $(W,Y)$ implies that $H_2(Y)$ injects into $H_2(W)$. The Mayer–Vietoris sequence implies that $H_2(W)$ injects into $H_2(DW)$ where $DW = W\cup_Y -W$ is the double.
Furthermore, since $W$ is a Spin $2$-handlebody, the double is diffeomorphic to the connected sum of several copies of $\sss$. By taking the submanifold sum of $Z'_X$ of $P_0$, we can assure that $Z$ has enough $\sss$-summands that $j:Y\hookrightarrow Z$ embeds. Since $b_1(Y)>0$ there is a homologically essential surface $i:\Sigma\hookrightarrow Y$. This becomes a homologically essential surface in $Z$, so the main theorem may be applied to $j\circ i$ to obtain a family of diffeomorphisms $\alpha_p$ topologically isotopic to the identity so that $\alpha_p\circ j\circ i$ are distinct up to smooth isotopy.
This implies that $\alpha_p\circ j$ are distinct up to smooth isotopy.
\end{proof}

\def\cprime{$'$}
\providecommand{\bysame}{\leavevmode\hbox to3em{\hrulefill}\thinspace}

\end{document}